\newcommand{\ga}{\alpha}
\newcommand{\gb}{\beta}
\newcommand{\gd}{\delta}
\newcommand{\gw}{\omega}
\newcommand{\gs}{\sigma}
\newcommand{\eps}{\varepsilon}
\newcommand{\coll}{\mathrm{Coll}}
\newcommand{\liff}{\leftrightarrow}
\newcommand{\cantor}{2^\gw}
\newcommand{\dothgen}{\dot h_{\mathit{gen}}}
\newcommand{\supp}{\mathrm{supp}}
\newcommand{\dom}{\mathrm{dom}}
\newcommand{\rng}{\mathrm{rng}}
\newtheorem{theorem}{Theorem}[section]
\newtheorem{corollary}[theorem]{Corollary}
\newtheorem{proposition}[theorem]{Proposition}
\theoremstyle{definition}
\newtheorem{definition}[theorem]{Definition}
\newtheorem{example}[theorem]{Example}
\newtheorem{conjecture}[theorem]{Conjecture}
\title{Coloring the distance graphs in three dimensions\footnote{2010 AMS subject classification 03E15, 03E25, 03E35.}}
\author{
Jind{\v r}ich Zapletal\\
University of Florida\\
zapletal@ufl.edu}
\begin{document}
\maketitle

\begin{abstract}
Let $\Delta_n$ be the graph on $\mathbb{R}^n$ connecting points of rational Euclidean distance. It is consistent with choiceless set theory that $\Delta_3$ has countable chromatic number, yet the chromatic number of $\Delta_4$ is uncountable.
\end{abstract}

\section{Introduction}

Let $n\geq 1$ be a number. Let $\Delta_n$ be the graph on $\mathbb{R}^n$ connecting points of rational Euclidean distance. Komj{\' a}th \cite{komjath:decomposition} proved  that in ZFC, all graphs $\Delta_n$ have countable chromatic number; the cases $n=2$ and $n=3$ are easier and have been known much earlier \cite{komjath:r3} \cite{hajnal:chromatic}. Difficulty of the proofs greatly increases with $n$, which leads to a natural conjecture. Recall that DC, the Axiom of Dependent Choices, is the fragment of the Axiom of Choice sufficient for a smooth development of mathematical analysis which does not imply existence of ``paradoxical'' objects such as countable colorings of graphs in question.

\begin{conjecture}
\label{conjecture1}
Let $n\geq 1$ be a number. The statement ``the chromatic number of $\Delta_n$ is countable while that of $\Delta_{n+1}$ is not"  is consistent with ZF+DC.
\end{conjecture}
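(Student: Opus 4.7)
The plan is to build, for each $n \geq 1$, a symmetric submodel $M$ of a forcing extension of a ZFC ground model such that $M\models\mathrm{ZF+DC}$, and in $M$ the graph $\Delta_n$ has countable chromatic number while $\Delta_{n+1}$ does not. In view of the title and abstract I concentrate on the case $n=3$, which is the main instance underlying the conjecture.

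A pure Solovay-style model will not work on the positive side. The graphs $\Delta_n$ are Borel, and for $n\geq 2$ they have uncountable Borel chromatic number, so in a Solovay model \emph{every} $\Delta_n$ would be uncountably chromatic. The natural remedy is first to add a generic countable coloring $c : \R^3 \to \gw$ of $\Delta_3$ by a tailored forcing whose conditions are finite or countable partial colorings of rational points avoiding rational distances, and only afterwards to perform a symmetric collapse. The shape of the coloring forcing should mirror Komj{\' a}th's ZFC argument for $\Delta_3$: identify a countable family of algebraic ``slabs'' or pieces that decomposes $\R^3$ so that rational distances can be handled piecewise, and use this decomposition to recognize which finite partial colorings are extendible. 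With the right closure properties, such a forcing adds a countable coloring of $\Delta_3$ while the final symmetric submodel still satisfies enough regularity to rule out ad hoc colorings coming from AC.

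For the negative side, one must show that in $M$, every function $d : \R^4 \to \gw$ admits a monochromatic edge of $\Delta_4$. Each such $d$ is coded by countably many parameters in the ground model together with the generic coloring $c$ of $\Delta_3$; by homogeneity of the collapse one may assume $d$ is definable from a single real. A genericity argument then produces a rational-distance pair $\{x,y\}\subseteq \R^4$ with $d(x)=d(y)$, exploiting the fact that the extra coordinate provides enough geometric freedom to shift a given point into a rational-distance neighbor lying in the same color class.

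The main obstacle, and the central content of the argument, is to explain why the coloring forcing for $\Delta_3$ does \emph{not} secretly code a countable coloring of $\Delta_4$ in the final model. This requires a delicate structural separation between the combinatorics of rational distances in dimension three and four---essentially a ZF-level explanation of the ``gap'' between Komj{\' a}th's arguments in these two dimensions. I expect this gap to manifest through a preservation theorem: the coloring forcing, composed with the symmetric collapse, preserves some definable largeness property of $\R^4$ which is incompatible with countable chromatic colorability of $\Delta_4$. Identifying that property, and verifying its preservation through both factors, looks like the most demanding part of the project.
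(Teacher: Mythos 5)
Your proposal has the right broad architecture---force a countable coloring of $\Delta_3$ over a Solovay-type choiceless model, then argue $\Delta_4$ stays uncountably chromatic by a preservation theorem---but there is a genuine gap precisely where you flag ``the main obstacle.'' You do not identify which largeness property of $\R^4$ is preserved, nor what combinatorial fact separates dimensions $3$ and $4$, and you guess that the forcing should mirror Komj{\' a}th's slab decomposition; neither guess is on target. The paper's discriminating lemma is a bipartite-clique dichotomy. Decompose $\Delta_n=\bigcup_{q\in\Q^+}\Delta_{nq}$ into its algebraic distance-$q$ slices. In $\R^3$, three distinct spheres of equal radius meet in at most two points, so $\Delta_{3q}$ contains no injective homomorphic copy of $K_{3,3}$. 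In $\R^4$, by contrast, the two circles $C_0=\{(r_0,r_1,0,0)\colon r_0^2+r_1^2=\eps^2/2\}$ and $C_1=\{(0,0,r_2,r_3)\colon r_2^2+r_3^2=\eps^2/2\}$ satisfy $C_0\times C_1\subset\Delta_{4\eps}$, giving a perfect biclique (and hence copies of $K_{n,n}$ for every $n$) inside any open rectangle meeting $X_{\eps 4}$ after an isometry. The first fact makes the coloring poset $K_{3,3}$-balanced; the second feeds the preservation theorem, which shows that in the extension every nonmeager $B\subset\R^4$ realizes all sufficiently small distances and therefore cannot be a $\Delta_4$-anticlique. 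Without isolating this asymmetry your ``preservation theorem'' remains a placeholder.

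A second, smaller but real, issue is your choice of conditions. Finite or countable partial colorings ``of rational points'' do not determine a coloring of $\R^3$, and there is no obvious way to make such a poset Suslin, $\sigma$-closed, and with total generic union. The paper instead uses colorings of $F^3$ for countable real closed subfields $F\subseteq\R$, subject to a density constraint on uncountable algebraic sets visible from $F$; the amalgamation that drives balance leans on model completeness of real closed fields and Noetherianity of algebraic sets, and would not go through on a rational grid. Finally, note that the conjecture as stated is for all $n\geq1$: the paper settles only $n\leq3$, with $n\geq4$ explicitly open (the relevant slices $\Delta_{nq}$ for $n\geq4$ already contain perfect bicliques, so the same poset does not apply), so any complete proof of the conjecture would necessarily go beyond the methods of the paper.
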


\noindent The book \cite[Section 12.3]{z:geometric} affirmed the conjecture in the cases $n=1$ and $n=2$. The conjecture is still open for $n=4$ and higher. In this paper, I prove 

\begin{theorem}
\label{maintheorem}
It is consistent relatively to an inaccessible cardinal that ZFC+DC holds, the chromatic number of $\Delta_3$ is countable, yet that of $\Delta_4$ is not.
\end{theorem}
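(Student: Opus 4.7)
The plan is to apply the balanced Suslin forcing machinery of \cite{z:geometric}: build a Suslin forcing $P$ whose generic object is a countable coloring of $\Delta_3$, verify that $P$ is \emph{balanced} in the sense required for the preservation theorems of geometric set theory, and then form a symmetric Solovay-style model over an inaccessible $\kappa$ via $\coll(\omega,<\kappa) * \dot P$. The resulting submodel, in which ZF+DC holds, will contain a countable coloring of $\Delta_3$ by construction; the general preservation theorems will then reduce the uncountable chromaticity of $\Delta_4$ in this model to the nonexistence of any balanced Suslin forcing that countably colors $\Delta_4$.

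Concretely, I would take conditions in $P$ to be finite partial functions $p \colon \dom(p) \to \omega$ with $\dom(p) \subset \R^3$ that properly color $\Delta_3 \restriction \dom(p)$, supplemented by finitely many side commitments encoding a piece of Komj\'ath's ZFC decomposition of $\R^3$ into countably many sets each realizing only few rational distances. The side conditions exist precisely to make the forcing balanced: every condition must extend to a virtual condition $\bar p$ such that, in any pair of mutually generic extensions below $\bar p$, the two generic colorings agree on their common reals. Establishing balance is the first substantive step; it should amount to an analytic repackaging of Komj\'ath's three-dimensional decomposition so that balanced colorings produced in disjoint generic extensions can always be amalgamated into a single proper coloring of $\Delta_3$.

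The final, and most delicate, step is to show that $\Delta_4$ retains uncountable chromatic number in the symmetric extension. By the general theory, any countable coloring of $\Delta_4$ in this model would arise from a balanced virtual condition in an analogous Suslin forcing $P'$, so it suffices to exhibit a geometric obstruction in $\R^4$ ruling out the existence of such a virtual condition. The expected obstruction is a rigid finite configuration $F \subset \R^4$ whose isometry type is genuinely four-dimensional, so that two mutually generic placements of isometric copies of $F$ would force a hypothetical balanced coloring to assign conflicting colors to two points connected by a rational-distance edge. Locating the correct rigid configuration and converting the resulting amalgamation failure into a contradiction is, I expect, the hardest part of the proof, since it must exploit precisely the geometric feature that separates $\R^4$ from $\R^3$ and that obstructs the direct generalization of Komj\'ath's three-dimensional decomposition.
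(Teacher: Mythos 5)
Your high-level architecture is correct --- extend the choiceless Solovay model by a $\sigma$-closed balanced Suslin coloring poset for $\Delta_3$, then invoke a preservation theorem to deny $\Delta_4$ a countable coloring --- and this is indeed what the paper does. But every one of the crucial implementation choices you make is wrong, and the gaps are not cosmetic.

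First, the coloring poset. Finite partial colorings decorated with side conditions derived from Komj\'ath's ZFC decomposition of $\R^3$ is not a workable design, and Komj\'ath's decomposition plays no role whatsoever in the paper's construction. The poset $P_{\Delta_3}$ the paper actually uses has conditions $p$ whose domains are $F^3$ for a \emph{countable real closed subfield} $F\subset\R$, with an additional density requirement: for every uncountable algebraic set $A$ visible from $F$, the set of colors used only finitely often on $A$ must lie in a fixed Borel ideal $\mathcal{I}$ on $\omega$ that is not countably generated. This is what makes the delicate list-coloring amalgamation lemma (Proposition~\ref{listproposition}) and hence balance go through; without it there is no reason two balanced colorings from a transcendental pair of extensions should amalgamate. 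Finite conditions with Komj\'ath-style side commitments have none of this structure.

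Second, ordinary balance is not enough; you need a strengthening. The paper introduces $K_{m,n}$-transcendence --- a quantified generalization of mutual genericity parameterized by complete bipartite graphs --- and the corresponding notion of $K_{m,n}$-balance (Definition~\ref{knmdefinition} and Definition~\ref{balancedefinition}). The relevant geometric input is that the single-distance graph $\Delta_{3q}$ contains no injective homomorphic $K_{3,3}$ (three distinct equal-radius spheres in $\R^3$ meet in at most two points), which makes $P_{\Delta_3}$ \emph{$K_{3,3}$}-balanced. Balance simpliciter does not separate $\R^3$ from $\R^4$.

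Third, your mechanism for the $\Delta_4$ side is conceptually off. You say the preservation theorems reduce the problem to ``the nonexistence of any balanced Suslin forcing that countably colors $\Delta_4$,'' and you propose a ``rigid finite configuration'' whose four-dimensional isometry type obstructs amalgamation. That is not how the argument runs. The paper's preservation theorem (Theorem~\ref{pres1theorem}) says: in $\sigma$-closed, cofinally $K_{n,n}$-balanced extensions of the Solovay model, \emph{every nonmeager set $B\subset\R^4$ realizes every sufficiently small positive distance}. Then Baire category finishes: any countable partition of $\R^4$ has a nonmeager piece, which therefore realizes a rational distance and cannot be a $\Delta_4$-anticlique. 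The geometric content in $\R^4$ that drives this is not rigidity but the converse of the $K_{3,3}$ fact: in $\R^4$ there \emph{are} $K_{n,n}$'s in each single-distance graph, witnessed by two orthogonal circles of radius $\varepsilon/\sqrt{2}$ (this is Example~\ref{4dexample}), and it is precisely this that feeds the transcendence argument in the preservation theorem. There is no appeal to ``any hypothetical balanced $P'$ coloring $\Delta_4$,'' and you would not be able to make such an appeal precise.

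In short: you found the right framework but none of the load-bearing pieces. Missing are the $K_{m,n}$-transcendence/$K_{m,n}$-balance calibration, the real-closed-field-plus-ideal poset design, the algebraic-geometry list-coloring lemma underlying balance, the correct (Baire category) preservation statement for $\R^4$, and the sphere-intersection dichotomy (no $K_{3,3}$ in $\R^3$, plenty of $K_{n,n}$ in $\R^4$) that separates the dimensions.
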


\noindent The proof uses the approach of geometric set theory \cite{z:geometric}. The model is constructed as a generic extension of the classical choiceless Solovay model \cite[Theorem 26.14]{jech:newset} by a rather canonical coloring poset (Definition~\ref{posetdefinition}) which works for a number of other graphs on Euclidean spaces. The poset can be also used to give a fairly short and conceptual new proof of the conjecture in the case of $n=2$. However, the basic assumptions of the construction of the coloring poset break down in the case of $n\geq 4$, leaving plenty of work for the future. There are many other geometrically natural graphs or hypergraphs which can be defined in Euclidean spaces of various dimensions, and conjectures parallel to~\ref{conjecture1} can be formulated. Many interesting cases will be handled in forthcoming work. The method uses an inaccessible cardinal to support the general framework. Removing the inaccessible cardinal in the spirit of \cite{Sh:176} requires plenty of improvisation, but in the given case is probably possible.
 
The argument follows the standard architecture of proofs in geometric set theory \cite{z:geometric}, which may be long, yet every step brings conceptually significant information to the reader. First, a generalization of mutual genericity is introduced, which is preserved by further mutually generic extension--Section~\ref{generalizationsection}. Then, some forcing extensions which satisfy this generalization are exhibited--Section~\ref{exampleIsection}. The conjunction of those two items leads to a strengthening of the notion of balance in Suslin posets and preservation theorems for extensions of the choiceless Solovay model by Suslin posets satisfying it--Section~\ref{preservationsection}. Finally, one has to construct interesting Suslin posets which possess balanced virtual conditions of this type. This is the key part of the argument, always leading to some natural amalgamation questions reminiscent of model theory--Section~\ref{exampleIIsection}. The confluence of these four rivers provides enough flotation to lift the ship of the final brief argument at the beginning of Section~\ref{exampleIIsection}.

The notation of the paper uses the set theoretic standard of \cite{jech:newset}, and in matters of geometric set theory, \cite{z:geometric}. All theorems and definitions take place in ZFC set theory.

\section{A generalization of mutual genericity}
\label{generalizationsection}

In this section, I introduce a combinatorially motivated variation of mutual genericity. Recall that if $m, n\geq 1$ are numbers then $K_{m, n}$ is the complete bipartite graph in which the sizes of the bipartition are $m$ and $n$ respectively.

\begin{definition}
\label{knmdefinition}
Let $n, m\geq 1$ be numbers. Let $V[G_0], V[G_1]$ be generic extensions inside some ambient forcing extension of the ground model $V$. Say that $V[G_0], V[G_1]$ are \emph{$K_{m, n}$-transcendental} if $V[G_0]\cap V[G_1]=V$ and whenever $\Gamma$ is a closed graph on a Polish space $X$ in $V$ and $x_0\in X\cap V[G_0]\setminus V$ and $x_1\in X\cap V[G_1]\setminus V$ are $\Gamma$-connected points, then $\Gamma$ contains an injective homomorphic image of $K_{m,n}$.
\end{definition}

\noindent I will need to show that this notion of transcendence behaves well with respect to mutuall generic extensions. This is encapsulated in the following easy but critical proposition.

\begin{proposition}
\label{productproposition}
Let $m, n\geq 1$ be numbers. Let $V[G_0], V[G_1]$ be $K_{m,n}$ -transcendental pair of generic extensions. Let $P_0\in V[G_0]$ and $P_1\in V[G_1]$ be posets and $H_0\subset P_0$ and $H_1\subset P_1$ be filters mutually generic over the model $V[G_0, G_1]$. Then $V[G_0][H_0], V[G_1][H_1]$ is a $K_{m, n}$-transcendental pair of extensions.
\end{proposition}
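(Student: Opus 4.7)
The plan is to verify separately the two clauses of $K_{m,n}$-transcendence for the pair $V[G_0][H_0],\,V[G_1][H_1]$.

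The intersection $V[G_0][H_0]\cap V[G_1][H_1]=V$ follows by chasing generics. Mutual genericity of $H_0,H_1$ over $V[G_0,G_1]$ gives $V[G_0,G_1][H_0]\cap V[G_0,G_1][H_1]=V[G_0,G_1]$, so any $z$ in the intersection lies in $V[G_0,G_1]$. Since $H_0$ is $P_0$-generic over $V[G_0,G_1]$, the filters $H_0$ and $G_1$ are mutually generic over $V[G_0]$, so $V[G_0][H_0]\cap V[G_0,G_1]=V[G_0]$ and hence $z\in V[G_0]$; symmetrically $z\in V[G_1]$, and the hypothesis $V[G_0]\cap V[G_1]=V$ gives $z\in V$.

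For the graph clause, let $\Gamma$ be closed on a Polish space $X\in V$ with $\Gamma(x_0,x_1)$ for some $x_0\in V[G_0][H_0]\setminus V$ and $x_1\in V[G_1][H_1]\setminus V$. Choose $P_i$-names $\dot x_i\in V[G_i]$ for $x_i$ and strengthen $p_i\in H_i$ to force $\dot x_i\notin V$ and $(p_0,p_1)\Vdash_{P_0\times P_1,\,V[G_0,G_1]}\Gamma(\dot x_0,\dot x_1)$. In an ambient forcing extension, add $m$ mutually generic filters $H_0^1,\dots,H_0^m$ for $P_0$ and $n$ mutually generic filters $H_1^1,\dots,H_1^n$ for $P_1$ over $V[G_0,G_1]$ below $p_0,p_1$ respectively, forming a single $P_0^m\times P_1^n$-generic. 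Setting $a_i=\dot x_0[H_0^i]$ and $b_j=\dot x_1[H_1^j]$, each pair $(H_0^i,H_1^j)$ is $V[G_0,G_1]$-generic for $P_0\times P_1$ below $(p_0,p_1)$, so $\Gamma(a_i,b_j)$ for all $i,j$; and $a_i\neq b_j$ follows from the intersection argument of the previous paragraph applied to $V[G_0][H_0^i],V[G_1][H_1^j]$ together with $a_i,b_j\notin V$. When moreover $x_0\notin V[G_0]$ and $x_1\notin V[G_1]$, one further strengthens so that $p_i\Vdash\dot x_i\notin V[G_i]$; then $a_i=a_{i'}$ would place the common value in $V[G_0][H_0^i]\cap V[G_0][H_0^{i'}]=V[G_0]$ by mutual genericity, contradicting $\dot x_0\notin V[G_0]$, and likewise the $b_j$'s are pairwise distinct. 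The resulting injective $K_{m,n}\hookrightarrow\Gamma$ in the ambient extension is a $\Sigma^1_1$ fact about $\Gamma$, hence absolute to $V$ by Shoenfield.

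The remaining cases are where one or both of the $x_i$ already lies in $V[G_i]$. If both $x_0\in V[G_0]\setminus V$ and $x_1\in V[G_1]\setminus V$, the hypothesis on $V[G_0],V[G_1]$ applies immediately to the edge $(x_0,x_1)$. In the one-sided mixed case, say $x_0\in V[G_0]\setminus V$ and $x_1\notin V[G_1]$, the $a_i$'s collapse to $x_0$, so the direct construction yields only $K_{1,n}$. To recover, consider the closed set $B=\{y\in X:\forall\text{ open }U\ni y\,\exists q\leq p_1\ q\Vdash_{P_1}\dot x_1\in\check U\}$, which lies in $V[G_1]$. From $(p_0,p_1)\Vdash\Gamma(\check x_0,\dot x_1)$ together with the density of $V[G_0,G_1]$-generic realizations of $\dot x_1$, one gets $B\subseteq\{y:\Gamma(x_0,y)\}$; and $B$ is uncountable because otherwise $x_1\in B\subseteq V[G_1]$, contradicting $x_1\notin V[G_1]$. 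The perfect subset property applied in $V[G_1]$ then produces $y_1\in B\cap V[G_1]\setminus V$, so the edge $\Gamma(x_0,y_1)$ with $x_0\in V[G_0]\setminus V$ and $y_1\in V[G_1]\setminus V$ triggers the hypothesised $K_{m,n}$-transcendence of $V[G_0],V[G_1]$ and delivers the required $K_{m,n}\hookrightarrow\Gamma$. The main obstacle is precisely this one-sided mixed case: mutual-generic copies on one side do not suffice, and one must extract a bona fide $V[G_1]\setminus V$ neighbor of $x_0$ from the closed set $B$ of forced values in order to invoke the original hypothesis.
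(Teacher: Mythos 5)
Your treatment of the intersection $V[G_0][H_0]\cap V[G_1][H_1]=V$ and of the case in which both $\tau_0\notin V[G_0]$ and $\tau_1\notin V[G_1]$ are fine, and largely parallel the paper (the paper works over a countable elementary submodel rather than a genuine generic extension, but that is a cosmetic difference).

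The mixed case, however, has a genuine gap. You form the closed set $B\in V[G_1]$ of possible values of $\dot x_1$ below $p_1$, correctly observe it is uncountable, and then assert that the perfect subset property ``produces $y_1\in B\cap V[G_1]\setminus V$.'' That inference is not justified. A perfect subset of $B$ coded in $V[G_1]$ need not contain a single point of $V[G_1]\setminus V$. The most glaring failure is when $V[G_1]=V$ (the pair $V[G_0],V$ is always $K_{m,n}$-transcendental, so this is a legitimate instance of the hypothesis), in which case $B\cap V[G_1]\setminus V=\emptyset$ outright; the same happens whenever the extension $V[G_1]$ adds no new elements of $X$ over $V$ (e.g.\ $G_1$ generic for a $\sigma$-closed poset). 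In those situations your construction cannot produce the requisite neighbor, and the original transcendence hypothesis for $V[G_0],V[G_1]$ is never triggered.

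The paper's Case 2 takes a different route precisely to avoid this. One builds $n$ mutually generic (over a countable submodel of $V[G_1]$) realizations $x_{1j}\in V[G_1]$ of $\tau_1$, pairwise distinct and each $\Gamma$-connected to $x_0$. Then one argues conditionally: if some $x_{1j}\notin V$, the original transcendence applies to the edge $(x_0,x_{1j})$ and you are done; otherwise all $x_{1j}$ lie in $V$, and now the closed set $C=\{x\in X:\forall j\ x\mathrel\Gamma x_{1j}\}$ is coded in $V$ and is uncountable because it contains $x_0\notin V$. So $C$ has a perfect subset in $V$, from which one can choose $m$ points distinct from each other and from the $x_{1j}$, yielding the injective copy of $K_{m,n}$. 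In other words, the missing idea is to move the perfect-set extraction to the \emph{common-neighbor} set $C$ living in $V$, where the presence of $x_0\notin V$ guarantees uncountability, rather than to try to extract a new point of $V[G_1]\setminus V$ from $B$, which may not exist.
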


\begin{proof}
The fact that $V[G_0][H_0]\cap V[G_1][H_1]=V$ uses mutual genericity in a straighforward way, and it is left to the reader. The verification of $K_{m, n}$-transcendence is slightly more difficult. Let $\Gamma$ be a closed graph on a Polish space $X$ in the ground model which does not contain an injective homomorphic image of $K_{m, n}$. Move to the model $V[G_0, G_1]$.
Suppose towards a contradiction that $p_0\in P_0$ and $p_1\in P_1$ are conditions and $\tau_0$, $\tau_1$ are $P_0$- and $P_1$-names in $V[G_0]$ and $V[G_1]$ respectively for elements of $X$ which do not belong to $V$, and $\langle p_0, p_1\rangle$ forces in $P_0\times P_1$ that $\tau_0\mathrel\Gamma\tau_1$ holds. The contradiction is derived in two separate cases.

\noindent\textbf{Case 1.} $p_0\Vdash\tau_0\notin V[G_0]$ and $p_1\Vdash\tau_1\notin V[G_1]$. Strengthening the conditions if necessary, it is possible to find disjoint basic open sets $O_0, O_1\subset X$ such that $p_0\in O_0$ and $p_1\in O_1$. Working in the respective models $V[G_0]$ and $V[G_1]$ find countable elementary submodels $M_0$ and $M_1$ of large structures, containing $p_0, \tau_0$ and $p_1, \tau_1$ respectively, and filters $h_{0i}\subset P_0$ for $i\in m$ and $h_{0j}\subset P_1$ for $j\in n$ which are mutually generic over the models $M_0$ and $M_1$ respectively and contain the conditions $p_0$ and $p_1$. By the case assumption and mutual genericity of these filters, the points $x_{0i}=\tau/h_{0i}$ and $x_{1j}=\tau_1/h_{1j}$ re pairwise distinct. For every pair of indices $i\in m$ and $j\in m$ it is the case that $x_{0i}\mathrel\Gamma x_{1j}$ holds. To see this, note that the graph $\Gamma$ is closed, and if this failed there would have to be basic open sets $O'_0, O'_1\subset X$ and conditions $p'_0\leq p_0$ and $p'_1\leq p_1$ in the filters $h_{0i}$ and $h_{1i}$ such that $O'_0\times O'_1\times\Gamma=0$ and $p'_{0i}\Vdash \tau_0\in O'_0$ and $p'_{1j}\Vdash\tau_{1j}\in O'_1$. This would contradict the initial assumptions on the names $\tau_0, \tau_1$.

Now, it is clear that the points $x_{0i}$ for $i\in m$ and $x_{1j}$ for $j\in n$ form an injective homomorphic image of $K_{m, n}$ in $\Gamma$, contradicting the initial assumptions on $\Gamma$.

\noindent\textbf{Case 2.} Case 1 fails. Strengthening the conditions if necessary and by symmetry we may assume that there is a point $x_0\in X\cap V[G_0]$ such that $p_0\Vdash\tau_0=\check x_0$ and that there is a basic open set $O_1\subset X$ which does not contain $x_0$ such that $p_1\Vdash\tau_1\in O_1$. By the initial assumptions, $x_0\in V[G_0]\setminus V$ must hold. It follows that $p_1\Vdash\tau_1\notin V[G_1]$ because no point in $V[G_1]\setminus V$ is $\Gamma$-connected with $x_0$ and $\tau_1$ is forced not to belong to $V$. Work in the model $V[G_1]$.

Let $M$ be a countable elementary submodel of some large structure containing $p_1, \tau_1$ and let $h_{1j}\subset P_1$ for $j\in n$ be mutually generic filters over the model $M$ containing the condition $p_1$. Let $x_{1j}=\tau_1/h_{1j}\in X$. As in the previous case, the points $x_{1j}$ are all pairwise distinct and $\Gamma$-connected to $x_0$.

By the transcendence assumption on the models $V[G_0]$ and $V[G_1]$, it must be the case that the points $x_{0j}$ for $j\in n$ all belong to $V$. Now, working in $V$, the closed set $C=\{x\in X\colon \forall j\in n\ x\mathrel\Gamma x_{0j}\}\subset X$ must be uncountable, since in some generic extension (namely $V[G_0]$) it contains a point (namely $x_0$) which is not in $V$. This means that $\Gamma$ contains an injective homomorphic copy of $K_{m, n}$, contradicting the initial assumptions of $\Gamma$ again.
\end{proof}

\begin{corollary}
Let $m, n\geq 1$ be numbers. Mutually generic extensions are $K_{m, n}$-transcendental.
\end{corollary}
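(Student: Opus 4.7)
The plan is to derive the corollary as a direct application of Proposition~\ref{productproposition} starting from the trivial pair $(V, V)$. First I would observe that the pair of extensions $V\subseteq V$ and $V\subseteq V$ is vacuously $K_{m,n}$-transcendental: the intersection condition $V\cap V=V$ is immediate, and the defining implication involves points in $X\cap V\setminus V$, a set which is empty, so there is nothing to verify about injective homomorphic images of $K_{m,n}$.

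Next, given mutually generic extensions $V[G_0]$ and $V[G_1]$ of the ground model, presented by posets $P_0,P_1\in V$ and mutually $V$-generic filters $G_0\subset P_0$ and $G_1\subset P_1$, I would apply Proposition~\ref{productproposition} with the starting transcendental pair taken to be $(V,V)$, with $P_0,P_1$ playing the role of the posets in the statement, and with $H_0=G_0$, $H_1=G_1$. Since $P_0,P_1$ lie in $V$, the hypothesis ``$P_0\in V[G_0]$ and $P_1\in V[G_1]$'' of the proposition is satisfied trivially, and mutual genericity of $H_0,H_1$ over $V[G_0,G_1]=V$ in this degenerate setup is precisely the hypothesis that $V[G_0]$ and $V[G_1]$ are mutually generic extensions of $V$. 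The conclusion of the proposition then asserts that $V[G_0]=V[H_0]$ and $V[G_1]=V[H_1]$ form a $K_{m,n}$-transcendental pair, which is exactly what the corollary demands.

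There is essentially no obstacle: the entire content has been absorbed into Proposition~\ref{productproposition}, and the corollary amounts to invoking it from the trivial base case. The only subtlety worth double-checking is the vacuity argument for $(V,V)$, which is fine because the definition of $K_{m,n}$-transcendence universally quantifies over points in $X\setminus V$ that happen to lie in each extension, and those sets are empty when the extension equals $V$.
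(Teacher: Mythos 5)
Your argument is correct and is essentially identical to the paper's: the paper also simply invokes Proposition~\ref{productproposition} starting from the trivially $K_{m,n}$-transcendental pair $\langle V, V\rangle$. You have merely spelled out the vacuity check and the identification of parameters a bit more explicitly than the paper does.
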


\begin{proof}
Just use the proposition with the initial pair $\langle V, V\rangle$ of models which is $K_{m,n}$-transcendental for trivial reasons.
\end{proof}

\noindent The conclusion of the corollary is in fact very weak. The following easy proposition gives a much stronger conclusion which is needed later in the paper.

\begin{proposition}
\label{perfproposition}
Let $\Gamma$ be a closed graph on a Polish space $X$. The following are equivalent:

\begin{enumerate}
\item there are perfect sets $C_0, C_1\subset X$ such that $C_0\times C_1\subset\Gamma$;
\item there are mutually generic extensions $V[G_0]$, $V[G_1]$ and $\Gamma$-connected points $x_0\in X\cap V[G_0]\setminus V$ and $x_1\in X\cap V[G_1]\setminus V$.
\end{enumerate}
\end{proposition}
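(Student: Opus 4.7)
The plan is to prove the two directions separately; $(1) \Rightarrow (2)$ is elementary, while $(2) \Rightarrow (1)$ calls for a fusion-style construction of perfect trees of conditions inside $V$.

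For $(1) \Rightarrow (2)$: since perfect Polish spaces contain a copy of $2^\gw$, fix continuous injections $f_0 \colon 2^\gw \to C_0$ and $f_1 \colon 2^\gw \to C_1$ in $V$. Take Cohen reals $c_0, c_1 \in 2^\gw$ mutually generic over $V$, so that $V[c_0], V[c_1]$ are mutually generic extensions. The points $x_0 = f_0(c_0), x_1 = f_1(c_1)$ avoid $V$, since each $f_i$ is a homeomorphism of $2^\gw$ onto its image and so $c_i$ is computable from $x_i$ via the ground-model map $f_i^{-1}$. The containment $C_0 \times C_1 \subset \Gamma$ is absolute, so $x_0 \mathrel\Gamma x_1$.

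For $(2) \Rightarrow (1)$: working in $V$ fix posets $P_0, P_1$ together with names $\tau_0, \tau_1$ realizing $x_0, x_1$, and conditions $p_0 \in P_0, p_1 \in P_1$ forcing $\tau_0 \notin \check V$ and $\tau_1 \notin \check V$. By mutual genericity of $G_0, G_1$, the product condition $\langle p_0, p_1 \rangle$ forces $\tau_0 \mathrel\Gamma \tau_1$ in $P_0 \times P_1$. I recursively construct, for each $s \in 2^{<\gw}$, conditions $p^0_s \leq p_0$ in $P_0$, $p^1_s \leq p_1$ in $P_1$, and basic open sets $O^0_s, O^1_s \subset X$ satisfying $p^i_s \Vdash \tau_i \in O^i_s$, $\overline{O^i_{sa}} \subset O^i_s$ and $\diam(O^i_s) < 2^{-|s|}$ for $a \in 2$, $\overline{O^i_{s0}} \cap \overline{O^i_{s1}} = \emptyset$, and $p^i_{sa} \leq p^i_s$. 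The splitting step is purely local to each side: since $p^i_s \Vdash \tau_i \notin \check V$, no ground-model value is forced, so $p^i_s$ admits two extensions that decide $\tau_i$ into disjoint basic opens of arbitrarily small diameter contained in $O^i_s$. The sets $C_i = \bigcap_n \bigcup_{s \in 2^n} \overline{O^i_s}$ are then perfect subsets of $X$ in $V$.

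Finally, verifying $C_0 \times C_1 \subset \Gamma$ is where closedness of $\Gamma$ enters. Given $x \in C_0, y \in C_1$, pick branches $s_\infty, t_\infty \in 2^\gw$ with $\{x\} = \bigcap_n \overline{O^0_{s_\infty \restriction n}}$ and $\{y\} = \bigcap_n \overline{O^1_{t_\infty \restriction n}}$. For each $n$, the product condition $\langle p^0_{s_\infty \restriction n}, p^1_{t_\infty \restriction n}\rangle$ is below $\langle p_0, p_1\rangle$ and hence forces $\tau_0 \mathrel\Gamma \tau_1$; picking any generic filter through it produces an actual $\Gamma$-edge $(x_n, y_n) \in O^0_{s_\infty \restriction n} \times O^1_{t_\infty \restriction n}$. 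As $n \to \infty$, $x_n \to x$ and $y_n \to y$, so $(x, y) \in \Gamma$ by closedness. The main subtlety worth flagging is that the naive requirement $\overline{O^0_s} \times \overline{O^1_t} \subset \Gamma$ at each finite stage is generally unattainable without interior-type hypotheses on $\Gamma$; the limiting argument dispenses with the need for it.
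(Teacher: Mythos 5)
Your direction $(1)\Rightarrow(2)$ is essentially the paper's: both force with (Cohen forcing for) $C_0\times C_1$ and use absoluteness of the closed containment. For $(2)\Rightarrow(1)$ you take a genuinely different route. The paper fixes a countable elementary submodel $M$, sets $A_i=\{\tau_i/g\colon g\subset P_i \text{ is $M$-generic}\}$, notes these are uncountable analytic sets (by the diagonalization argument against $\tau_i\in V$), shows $A_0\times A_1\subset\Gamma\cup\{\text{diagonal}\}$ using closedness and the forcing theorem in $M$, and then invokes the perfect set theorem for analytic sets to produce disjoint perfect $C_0\subset A_0$, $C_1\subset A_1$. You instead build a Cantor scheme of conditions directly and verify $C_0\times C_1\subset\Gamma$ by a limiting argument — effectively inlining the proof of the perfect set theorem rather than invoking it as a black box. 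Both arguments are sound, and they have the same moral content; the paper's version is shorter because it packages the fusion into a classical theorem.

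One step in your fusion construction is stated too quickly: the passage from ``$p^i_s\Vdash\tau_i\notin\check V$'' to ``$p^i_s$ admits two extensions deciding $\tau_i$ into disjoint small basic open sets'' is not immediate. The missing argument: if every pair of basic open sets into which extensions of $p^i_s$ force $\tau_i$ had to intersect, the family of such open sets (which contains sets of arbitrarily small diameter) would have pairwise nonempty intersection and shrinking diameters, hence would determine a single point $x_0\in X$ in $V$, and then $p^i_s\Vdash\tau_i=\check x_0$ — contradiction. Equivalently one can pass through a countable elementary submodel as the paper does. A second small point worth making explicit: when you ``pick any generic filter'' to produce the approximating edges $(x_n,y_n)$, you should take filters $g_0\times g_1$ generic over a countable elementary submodel containing the finite data $\langle p^0_{s_\infty\restriction n},p^1_{t_\infty\restriction n},\tau_0,\tau_1,\Gamma\rangle$; then the forced statement $\tau_0\mathrel\Gamma\tau_1$ holds in the countable extension and transfers to $V$ by the closedness of $\Gamma$ (Mostowski absoluteness). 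Neither of these is a serious gap — both are standard — but they deserve a line.
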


\begin{proof}
For (1)$\to$(2), just let $P_0, P_1$ be Cohen forcings for $C_0, C_1$--the posets of nonempty relatively open subsets of $C_0$ and $C_1$ ordered by inclusion. They add respective generic points $\tau_0\in C_0$ and $\tau_1\in C_1$ which do not belong to $V$. Moreover, $P_0\times P_1\vdash \tau_0\mathrel\Gamma\tau_1$ since $C_0\times C_1\subset\gamma$ holds in the $P_0\times P_1$ extension by a Mostowski absoluteness argument.

(2)$\to$(1) is slightly more difficult. Suppose that $P_0, P_1$ are posets, $\tau_0$, $\tau_1$ are respective $P_0$- and $P_1$-names for elements of $X\setminus V$ such that $P_0\times P_1\Vdash\tau_0\mathrel\Gamma\tau_1$. Let $M$ be a countable elementary submodel of a large substructure containing in particular $P_0, P_1, \Gamma, \tau_0$ and $\tau_1$.
Consider the sets $A_0=\{x\in X\colon$ there is a filter $g\subset P_0$ generic over $M$ such that $\tau_0/g=x\}$ and $A_1=\{x\in X\colon$ there is a filter $g\subset P_1$ generic over $M$ such that $\tau_1/g=x\}$. These are analytic subsets of $X$ by their definitions. Since $\tau_0$ and $\tau_1$ are forced not to belong to the ground model, a simple diagonalization argument shows that both sets $A_0, A_1$ are uncountable.

 It is also the case that $A_0\times A_1\subset\Gamma\cup$the diagonal. If this failed, there would have to be filters $g_0\subset P_0$ and $g_1\subset P_1$ generic over $M$ such that the points $x_0=\tau_0/g_0$ and $x_1=\tau_1/g_1$ are distinct and $\Gamma$-disconnected. Since the graph $\Gamma$ is closed, there must be basic open sets $O_0, O_1\subset X$ such that $x_0\in O_0$ and $x_1\in O_1$ and $O_0\times O_1\cap\Gamma=0$. By the forcing theorem applied in $M$, there have to be conditions $p_0\in g_0$ and $p_1\in g_1$ such that $p_0\Vdash\tau_0\in O_0$ and $p_1\Vdash\tau_1\in O_1$ holds in $M$. Since $M$ is an elementary submodel, the two forcing statements hold in $V$. In conclusion, the pair $\langle p_0, p_1\rangle$ forces in the product that $\tau_0, \tau_1$ are not $\Gamma$-connected. This contradicts the initial assumptions on $\tau_0, \tau_1$.

Now, to complete the proof of (2), just use the perfect set theorem for analytic sets to find disjoint perfect sets $C_0\subset A_0$ and $C_1\subset A_1$.
\end{proof}

\section{Examples I}
\label{exampleIsection}

In this section, I provide two examples of $K_{m, n}$-transcendental extensions. They depend on simple preliminary definitions and considerations which are not stated in their strongest possible form. Throughout this section, if $X$ is a Polish space the $P_X$ is the poset of nonempty open subsets of $X$ ordered by inclusion. It adds a generic point of $X$ which is the unique point in the intersection of all open sets in the generic filter. There is a useful observation about continuous open maps between Polish spaces: if $f\colon X\to Y$ is a continuous open map, then the $f$-image of the $P_X$-generic point is a $P_Y$-generic point \cite[Proposition 3.1.1]{z:geometric}.

\begin{definition}
\label{pendefinition}
Let $\eps>0$ be a real number and $n\geq 1$ be a number. Let $X_{\eps n}\subset [\mathbb{R}^n]^2$ be the closed set consisting of pairs of points of Euclidean distance $\eps$. Let $P_{\eps n}$ be its associated Cohen poset of nonempty open subsets of $X_{\eps n}$ ordered by inclusion. The poset adds a generic pair whose coordinates are denoted by $\dot x_0$ and $\dot x_1$.
\end{definition}

\begin{proposition}
\label{penproposition}
Let $n\geq 1$ and let $\eps>0$ be a real number. The poset $P_{\eps n}$ forces $\dot x_0, \dot x_1$ are separately Cohen generic elements of the space $\mathbb{R}^n$.
\end{proposition}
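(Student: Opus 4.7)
The plan is to invoke the principle, quoted just before the proposition, that a continuous open map carries the $P_X$-generic point to the $P_Y$-generic point. Writing $X_{\eps n}$ as the closed subset $\{(u, v) \in \mathbb{R}^n \times \mathbb{R}^n : \|u - v\| = \eps\}$ of $(\mathbb{R}^n)^2$, the two coordinate projections $\pi_0, \pi_1 \colon X_{\eps n} \to \mathbb{R}^n$ are the obvious candidates, and the generic point of $X_{\eps n}$ is exactly the pair $\langle \dot x_0, \dot x_1\rangle$. So it suffices to verify that both $\pi_0$ and $\pi_1$ are continuous open surjections onto $\mathbb{R}^n$.

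Continuity and surjectivity are obvious (continuity is inherited from the ambient product, and surjectivity comes from the fact that every point in $\mathbb{R}^n$ has companions at distance $\eps$). For openness, it is enough to check that the $\pi_0$-image of a basic open set $(U \times V) \cap X_{\eps n}$, with $U, V$ open in $\mathbb{R}^n$, is open in $\mathbb{R}^n$; the argument for $\pi_1$ is symmetric. So let $x \in U$ be such that there exists $y \in V$ with $\|x - y\| = \eps$. Pick $\delta > 0$ small enough that $B(x, \delta) \subset U$ and $B(y, \delta) \subset V$. For any $x' \in B(x, \delta)$, the translated point $y' = x' + (y - x)$ satisfies $\|y' - x'\| = \|y - x\| = \eps$ and $\|y' - y\| = \|x' - x\| < \delta$, so $y' \in V$ and $(x', y') \in (U \times V) \cap X_{\eps n}$. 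Hence $B(x, \delta)$ lies in the $\pi_0$-image, establishing openness.

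With this in hand, the cited Proposition 3.1.1 of \cite{z:geometric} immediately yields that $\dot x_0 = \pi_0(\dot x_0, \dot x_1)$ and $\dot x_1 = \pi_1(\dot x_0, \dot x_1)$ are each $P_{\mathbb{R}^n}$-generic over $V$, that is, Cohen generic elements of $\mathbb{R}^n$. The only substantive point in the argument is the translation trick witnessing openness of the projections; this works because the ``distance $\eps$'' relation is translation-invariant, and that is the entire reason the proposition holds.
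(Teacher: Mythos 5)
Your proof is correct and follows exactly the same path as the paper's: identify the two coordinate projections $\pi_0,\pi_1\colon X_{\eps n}\to\mathbb{R}^n$, check they are continuous open surjections, and invoke \cite[Proposition 3.1.1]{z:geometric}. The paper leaves the openness verification to the reader, whereas you supply the (correct) translation argument; otherwise the two arguments are identical.
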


\begin{proof}
This follows from the fact that both projection maps from $X_{\eps n}$ to $\mathbb{R}^n$ are open by \cite[Proposition 3.1.1]{z:geometric}. The openness of these maps is immediate and left to the reader.
\end{proof}

\begin{definition}
\label{isocopydefinition}
Let $\langle M, d\rangle$ be a finite metric space and $n\geq 1$ be a number such that $M$ can be isometrically embedded in $\mathbb{R}^n$. Let $X_{Mn}$ be the space of all isometric embeddings from $M$ to $\mathbb{R}^n$ equipped with the topology inherited from $(\mathbb{R}^n)^M$. The Cohen poset $P_{Mn}$ associated with it introduces a generic isometric embedding $\dothgen\colon M\to \mathbb{R}^n$.
\end{definition}

\begin{proposition}
\label{genproposition}
Let $m_0, m_1\in M$ be distinct points of $d$-distance $\eps$. Then $P_{Mn}$ forces the pair $\dothgen(m_0), \dothgen(m_1)$ to be $P_{\eps n}$-generic.
\end{proposition}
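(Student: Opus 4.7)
The plan is to apply the observation cited before Definition~\ref{pendefinition}: if $f \colon X \to Y$ is a continuous open map between Polish spaces, then the $f$-image of the $P_X$-generic point is $P_Y$-generic. I will define $e \colon X_{Mn} \to X_{\eps n}$ by $e(h) = \{h(m_0), h(m_1)\}$, so that $e(\dothgen) = \{\dothgen(m_0), \dothgen(m_1)\}$, and then verify that $e$ is both continuous and open.

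Continuity is immediate, since the topology on $X_{Mn}$ is inherited from the product topology on $(\R^n)^M$ and $e$ is the composition of the projection to the two coordinates $m_0, m_1$ with the quotient to unordered pairs. The substantive content is therefore openness.

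To verify openness, fix $h_0 \in X_{Mn}$ and a basic open neighborhood $U = \{h \in X_{Mn} : \forall m \in M\ \|h(m) - h_0(m)\| < \delta\}$, and show that $e[U]$ contains a neighborhood of $e(h_0)$. Given any pair $\{y_0, y_1\} \in X_{\eps n}$ sufficiently close to $\{h_0(m_0), h_0(m_1)\}$, the idea is to produce a rigid motion $\phi$ of $\R^n$ sending $h_0(m_0)$ to $y_0$ and $h_0(m_1)$ to $y_1$; such a $\phi$ exists because the two ordered pairs have the same Euclidean distance $\eps$. If in addition $\phi$ can be chosen to depend continuously on $(y_0, y_1)$ and to reduce to the identity when $(y_0, y_1) = (h_0(m_0), h_0(m_1))$, then on the bounded finite set $h_0[M]$ it is uniformly close to the identity, so $\phi \circ h_0 \in U$. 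Since $\phi$ is an isometry of $\R^n$ and $h_0$ is an isometric embedding of $M$, the composition $\phi \circ h_0$ is again an isometric embedding of $M$ into $\R^n$, and by construction $e(\phi \circ h_0) = \{y_0, y_1\}$. This will finish the argument.

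The main obstacle is the continuous choice of the rigid motion $\phi$. For $n = 1$ a translation suffices and there is nothing to check. For $n \geq 2$ I would argue that the group of Euclidean isometries acts transitively on ordered pairs at a fixed distance $\eps$, and that a local continuous section of this action exists in a neighborhood of $(h_0(m_0), h_0(m_1))$; concretely, first translate $h_0(m_0)$ to $y_0$, then apply a rotation fixing $y_0$ that carries the translated $h_0(m_1)$ onto $y_1$. The rotation angle (or, for $n \geq 3$, a choice of rotation in the plane spanned by the two vectors) depends continuously on the target pair and collapses to the identity in the limit, which is all that is needed.
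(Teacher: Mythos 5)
Your proposal follows exactly the same route as the paper: both reduce the claim to the fact that the coordinate projection $X_{Mn}\to X_{\eps n}$ is continuous and open and then invoke \cite[Proposition 3.1.1]{z:geometric}. The paper declares the openness ``immediate and left to the reader''; your rigid-motion argument (translate $h_0(m_0)$ to $y_0$, then a small rotation in the $2$-plane through $y_0$ spanned by the two relevant vectors, chosen so the whole motion depends continuously on the target pair and collapses to the identity at the base point) is a correct way to supply that omitted check.
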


\noindent In fact, a more careful argument with a small infusion of rigidity theory will show that if $N\subset M$ is any subset then $\dothgen\restriction N$ is forced to be a $P_{Nn}$-generic embedding of $N$ into $\mathbb{R}^n$.

\begin{proof}
This again follows from the fact that the projection map from $X_{Mn}$ to $X_{\eps n}$ given by the coordinates $m_0, m_1$ is open, together with \cite[Proposition 3.1.1]{z:geometric}. The openness of this map is immediate and left to the reader.
\end{proof}

\noindent Finally, I am ready to investigate transcendence properties of extensions generated by the posets $P_{\eps n}$ for various dimensions $n\geq 2$.

\begin{example}
\label{2dexample}
Let $\eps>0$ be a real number and $n\geq 2$. Let $x_0, x_1\in\mathbb{R}^n$ be $P_{\eps n}$-generic pair of elements of $\mathbb{R}^n$. Then $V[x_0]\cap V[x_1]=V$.
\end{example}

\begin{proof}
If this failed, there would have to be an ordinal $\ga$, $P_{\mathbb{R}^n}$-names $\tau_0, \tau_1$ for subsets of $\ga$ which do not belong to $V$, and a condition $p\in P_{\eps n}$ such that  $p\Vdash\tau_0/\dot x_0=\tau_1/\dot x_1$. Strengthening $p$ further, we may find disjoint basic open sets $O_0, O_1\subset \mathbb{R}^n$ such that $p=(O_0\times O_1)\cap X_{\eps n}$.

Find disjoint open sets $O_{00}, O_{01}\subset O_0$ such that for all points $y_{00}\in O_{00}$ and $y_{01}\in O_{01}$ there is a point $y_1\in O_1$ of Euclidean distance $\eps$ from both. Find an ordinal $\gb\in\ga$ and nonempty open sets of $O_{00}$ which decide the statement $\check\gb\in\tau_0$ in a different way, and an open subset of $O_{01}$ deciding the statement $\check\gb\in\tau_0$ in the poset $P_{\mathbb{R}^n}$. It is now clear that one can thin down the open sets $O_{00}$ and $O_{01}$ if necessary so that they decide the statement $\check\gb\in\tau_0$ in opposite ways. 

Find points $y_{00}\in O_{00}$ and $y_{01}\in O_{01}$ and $y_1\in O_1$ such that the last mentioned point has distance $\eps$ from the previous two. Use the poset of Definition~\ref{isocopydefinition} to force a generic isometric copy of the triple $\langle y_{00}, y_{01}, y_1\rangle$ in the open set $O_{00}\times O_{01}\times O_1$. Let $z_{00}, z_{01}, z_1\rangle$ be the generic isometric copy. By Proposition~\ref{genproposition}, The pairs $\langle z_{00}, z_1\rangle$ and $\langle z_{01}, z_1\rangle$ are both $P_{\eps n}$-generic and meet the condition $p$. By the forcing theorem, it should be the case that $\tau_0/z_{00}=\tau_1/z_1=\tau_{0}/z_{01}$. This is impossible as the first and last set differ in the membership of the ordinal $\gb$ in them.
\end{proof}

\begin{example}
\label{3dexample}
Let $\eps>0$ be a real number. Let $x_0, x_1\in\mathbb{R}^3$ be $P_{\eps 3}$-generic pair of elements of $\mathbb{R}^3$. The extensions $V[x_0]$, $V[x_1]$ are $K_{2, n}$-transcendental for every $n\in\gw$.
\end{example}

\begin{proof}
It follows from Example~\ref{2dexample} that $V[x_0]\cap V[x_1]=V$.  Now suppose that $\Gamma$ is a closed graph on a Polish space $Z$ and that some condition $p\in P_{\eps 3}$ forces that there are $\Gamma$-connected points $z_0\in V[\dot x_0]\setminus V$ and $z_1\in V[\dot x_1]\setminus V$. I must find a homomorphic injective copy of $K_{2, n}$ in $\Gamma$. Use Proposition~\ref{penproposition} (1) to strengthen the condition $p$ to find $P_{\mathbb{R}^3}$-names $\tau_0$, $\tau_1$ for elements of $Z\setminus V$ such that $p\Vdash\tau_0/\dot x_0\mathrel\Gamma\tau_1/\dot x_1$. Strengthening $p$ further, we may find disjoint basic open sets $O_0, O_1\subset \mathbb{R}^3$ such that $p=(O_0\times O_1)\cap X_{\eps 3}$.

Now, it is easy to find points $y_{0i}\colon i\in 2$ in $O_0$ and $y_{1j}\colon j\in n$ in $O_1$ such that for each $i\in 2$ and $j\in n$ the distance of $y_{0i}$ from $y_{1j}$ is equal to $\eps$: first find the points $y_{0i}$ close to each other and then place the points $y_{1j}$ in the circle which is the intersection of the $\eps$-spheres around them. Force a generic isometric copy of the set $\{y_{0i}\colon i\in 2, y_{1j}\colon j\in n\}$ in $O_0\cup O_1$ using the poset of Definition~\ref{isocopydefinition}, obtaining points $x_{0i}\in O_0$ for $i\in 2$ and $x_{1j}\in O_1$ for $j\in n$.

By Proposition~\ref{genproposition}, each pair among these points is generic for their respective distance poset. By Example~\ref{2dexample}, the models obtained by adjoining one of these points pairwise intersect in $V$. Thus, the points $\tau_0/x_{0i}$ for $i\in 2$ and $\tau_1/x_{1j}$ for $j\in n$ in the space $Z$ are pairwise distinct, and by the forcing theorem applied to the poset $P_{\eps 3}$, for each $i\in 2$ and each $j\in n$ the points $\tau_0/x_{0i}$ and $\tau_1/x_{1j}$ are $\Gamma$-connected. This shows that the graph $\Gamma$ contains an injective homomorphic copy of $K_{2, n}$.
\end{proof}

\begin{example}
\label{4dexample}
Let $\eps>0$ be a real number. Let $x_0, x_1\in\mathbb{R}^4$ be $P_{\eps 4}$-generic pair of elements of $\mathbb{R}^4$. The extensions $V[x_0]$, $V[x_1]$ are $K_{n, n}$-transcendental for every $n\in\gw$.
\end{example}

\begin{proof}
This follows the lines of the previous proof, with an important additional insight. Consider the sets $C_0, C_1\subset\mathbb{R}^4$ defined by $C_0=\{\langle r_0, r_1, 0, 0\rangle\colon r_0^2+r_1^2=\eps^2/2\}$ and  $C_1=\{\langle 0, 0, r_2, r_3\rangle\colon r_2^2+r_3^2=\eps^2/2\}$. It is clear that any element of $C_0$ has Euclidean distance $\eps$ to each element of $C_1$. Now, suppose $O_0, O_1\subset\mathbb{R}^2$ are open sets such that $X_{\eps 4}\cap O_0\times O_1\neq 0$. Then, it is possible to find an isometry $\pi$ of $\mathbb{R}^4$ such that $\pi''C_0\cap O_0$ and $\pi''C_1\cap O_1$ are both nonempty (and therefore uncountable) sets. Then, choose pairwise distinct points $y_{0i}\in \pi''C_0\cap O_0$ and $y_{1i}\in\pi''C_1\cap O_1$ for $i\in n$ and proceed as in the previous example.
\end{proof}

\section{Preservation theorems}
\label{preservationsection}

Every generalization of mutual genericity such as the ones introduced in Definition~\ref{knmdefinition} comes with an associated notion of balance for Suslin partial orders.

\begin{definition}
\label{balancedefinition}
Let $P$ be a Suslin poset and $m, n\geq 1$ be numbers.

\begin{enumerate}
\item A virtual condition $\bar p$ in $P$ is \emph{$K_{m, n}$-balanced} if for every $K_{m, n}$-transcendental pair $V[G_0]$, $V[G_1]$ of generic extensions and conditions $p_0\in V[G_0]$ and $p_1\in V[G_1]$ in $P$ below $\bar p$, $p_0, p_1$ have a common lower bound.
\item The poset $P$ is \emph{$K_{m, n}$-balanced} if below every condition in $P$ there is a $K_{m, n}$-balanced virtual condition.
\end{enumerate}
\end{definition}

\noindent In this section, I present two preservation theorems for $K_{n, m}$-balanced forcings which are relevant to colorings of rational distance graphs on Euclidean spaces. The parlance follows the standard of geometric set theory as introduced in \cite{z:geometric}, in particular, \cite[Convention 1.7.18]{z:geometric}.

\begin{theorem}
\label{pres1theorem}
Let $n\geq 1$ be any number. In $\gs$-closed, cofinally $K_{n, n}$-balanced extensions of the Solovay model, for every nonmeager set $B\subset \mathbb{R}^4$ there is a positive real number $\eps>0$ such that every positive number smaller than $\eps$ is the distance of some points in $B$.
\end{theorem}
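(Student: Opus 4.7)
The plan is to combine the $K_{n,n}$-transcendence of $P_{\delta 4}$-generated extensions from Example~\ref{4dexample} with the $K_{n,n}$-balance hypothesis, together with the $\sigma$-closed preservation of Baire category from the Solovay parent.

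Let $W$ denote the Solovay parent model, $P$ the $\sigma$-closed, cofinally $K_{n,n}$-balanced Suslin poset, and $\dot B$ a $P$-name for a nonmeager subset of $\mathbb{R}^4$, forced nonmeager by some $p_0 \in P$. Since $\sigma$-closed forcing adds no reals and preserves Baire category from the Solovay model (in which every set of reals has the Baire property), one can strengthen $p_0$ to a condition $p$ and select a basic open ball $O \subset \mathbb{R}^4$ such that $p$ forces $\dot B$ to contain a comeager subset of $O$. Choose $\varepsilon > 0$ smaller than the diameter of $O$, so that for every $\delta \in (0,\varepsilon)$ the slice $X_{\delta 4} \cap (O \times O)$ is a nonempty open subset of $X_{\delta 4}$.

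To establish the conclusion below $p$, fix an arbitrary $q \leq p$ and any $\delta \in (0,\varepsilon)$; it suffices to produce below $q$ a condition forcing two $\dot B$-points at distance $\delta$. By cofinal balance, pick a $K_{n,n}$-balanced virtual condition $\bar q \leq q$. In some ambient extension of $W$, let $(x_0, x_1)$ be a $P_{\delta 4}$-generic pair over $W$ with both coordinates in $O$. By Example~\ref{4dexample} the extensions $W[x_0]$ and $W[x_1]$ form a $K_{n,n}$-transcendental pair, and by Proposition~\ref{penproposition} each $x_i$ is separately Cohen-generic in $\mathbb{R}^4$, and hence in the open set $O$. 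Since $P$ is $\sigma$-closed and thus preserves Cohen-genericity, and since $\bar q$ forces $\dot B$ to be comeager in $O$, the virtual condition $\bar q$, viewed as a condition in $W[x_i]$, forces $\check x_i \in \dot B$ for each $i \in 2$. Applying the $K_{n,n}$-balance of $\bar q$ to the transcendental pair $(W[x_0], W[x_1])$ produces a common lower bound $r \in P$ of the two copies of $\bar q$; the condition $r \leq q$ then forces $\check x_0, \check x_1 \in \dot B$ at distance $\delta$, as required.

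The principal technical obstacle is the middle claim that $\bar q$, viewed from $W[x_i]$, forces $\check x_i \in \dot B$. This is the standard interaction between a balanced virtual condition and Cohen-generic parameters in the Solovay framework: it requires that the $\sigma$-closed virtual extension in which $\bar q$ lives still sees $x_i$ as Cohen-generic in $O$, and that the forced comeagerness of $\dot B$ in $O$ then forces $\check x_i \in \dot B$ by a simple density argument. With this step in place, the rest is a direct application of balance together with the dimension-four specific isometry argument underlying Example~\ref{4dexample}.
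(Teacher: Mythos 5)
Your proposal contains a fundamental gap at its very first step, and this gap is precisely what the paper's machinery is designed to circumvent.

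You assert that because $P$ is $\sigma$-closed and adds no reals, one can strengthen $p_0$ to a condition $p$ and find a basic open ball $O\subset\mathbb{R}^4$ such that $p$ forces $\dot B$ to contain a comeager subset of $O$. This is false. The set $\dot B$ lives in the $P$-extension, not in the Solovay model $W$: in the intended application it is a color class of the generic $\Delta_3$-coloring, a new subset of the reals defined from the generic filter. Even though $P$ adds no new reals, such a new set of reals need not have the Baire property, and a nonmeager set without the Baire property need not be comeager in any open set. Indeed, if one could always find such an $O$, Theorem~\ref{pres1theorem} would be an immediate consequence of the Steinhaus theorem (a nonmeager set with the Baire property achieves all sufficiently small distances), and the entire apparatus of $K_{n,n}$-balance would be superfluous. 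The theorem is nontrivial precisely because $\dot B$ may lack the Baire property.

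The paper's proof handles this without ever assuming $\dot B$ is comeager anywhere. It instead assumes the conclusion fails, produces (via $\sigma$-closure) a countable set $a$ of omitted distances converging to zero, and then argues by contradiction: there must exist a Cohen condition $q$, an auxiliary poset $R$ of size $<\kappa$, and a $Q\times R$-name $\sigma$ for a condition below $\bar p$ forcing the Cohen generic $\dot y$ into $\tau$, \emph{for otherwise the dense $G_\delta$ set of points of $\mathbb{R}^4$ Cohen-generic over $V[K]$ would be forced disjoint from $\tau$ below $\bar p$}, contradicting nonmeagerness. This reversal --- from ``$\tau$ is comeager somewhere'' to ``$\tau$ is not disjoint from a fixed comeager set'' --- is what avoids any Baire-property hypothesis on $\tau$. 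Your proof has no analogue of this move.

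There is a secondary imprecision as well: you treat the virtual condition $\bar q$ as though it were an ordinary condition forcing $\check x_i\in\dot B$. In the paper, the conditions witnessing $y_i\in\tau$ are actual conditions $p_i=\sigma/y_i,H_i$ obtained by evaluating the name $\sigma$ at the generic $y_i$ and a further mutually generic filter $H_i\subset R$; the role of Proposition~\ref{productproposition} is exactly to certify that the pair $V[K][y_0][H_0]$, $V[K][y_1][H_1]$ remains $K_{n,n}$-transcendental so that balance of $\bar p$ applies to $p_0,p_1$. Without the auxiliary forcing $R$ and the passage through $\sigma$, the conditions you need to feed into the balance hypothesis simply do not exist.
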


\begin{proof}
Let $\kappa$ be an inaccessible cardinal. Let $P$ be a Suslin poset which is cofinally $K_{n, n}$-balanced below $\kappa$. Let $W$ be the choiceless Solovay model obtained from $\kappa$. Work in $W$. Let $p\in P$ be a condition and $\tau$ be a $P$-name for a non-meager subset of $\mathbb{R}^4$ for which the conclusion fails. Because the poset $P$ is $\gs$-closed, gradually strengthening the condition $p$ it is possible to find a countable set $a\subset\mathbb{R}^+$ converging to zero such that $p$ forces that no two points of $\tau$ have distance in the set $a$. To reach a contradiction, it is necessary to find conditions $p_0, p_1\leq p$ and points $x_0, x_1\in\mathbb{R}^4$ such that the distance between $x_0, x_1$ belongs to $a$, $p_0\Vdash\check x_0\in \tau$, $p_1\Vdash\check x_1\in\tau$, and $p_0, p_1$ have a common lower bound.

To do this, first find a parameter $z\in\cantor$ such that $p, \tau, a$ are definable from $z$. Let $V[K]$ be an intermediate generic extension of $V$ obtained by a poset of cardinality smaller than $\kappa$ such that $z\in V[K]$ and $P$ is $K_{n, n}$-balanced. Work in the model $V[K]$. Let $\bar p\leq p$ be a $K_{n, n}$-balanced condition below $p$. Consider the Cohen forcing $Q$ of nonempty open subsets of $\mathbb{R}^4$, adding a single point $\dot y\in\mathbb{R}^4$ in the intersection of all sets in the generic filter. There must be a condition $q\in Q$, a poset $R$ of cardinality smaller than $\kappa$ and a $Q\times R$-name $\gs$ for a condition in $P$ stronger than $\bar p$ such that $q\Vdash_Q R\Vdash\coll(\gw, <\kappa)\Vdash \gs\Vdash_P\dot y\in\tau$. Otherwise, in the model $W$ the dense $G_\gd$-set of all points $Q$-generic over $V[K]$ would be forced disjoint from $\tau$ by $\bar p$, contradicting the initial assumptions on the name $\tau$.

Work in $W$ again. Let $\eps\in a$ be a number so small that $q$ contains two points of distance $\eps$. Find a pair $\langle y_0, y_1\rangle$ of points in $q$ of distance $\eps$ which is generic over $V[K]$ for the poset $P_{4\eps}$. By Example~\ref{4dexample}, the models $V[K][y_0]$ and $V[K][y_1]$ are $K_{n, n}$-transcendental. Let $H_0, H_1\subset R$ be filters mutually generic over the model $V[K][y_0, y_1]$. By Proposition~\ref{productproposition}, the models $V[K][y_0][H_0]$ and $V[K][y_1][H_1]$ are still $K_{n, n}$-transcendental. Let $p_0=\gs/y_0, H_0$ and $p_1=\gs/y_1, H_1$. These are conditions stronger than $\bar p$ which belong to the respective models and force in $W$ that $\check y_0\in\tau$ and $\check y_1\in\tau$ respectively holds. By the balance of the virtual condition $\bar p$, the conditions $p_0, p_1$ have a lower bound as required.
\end{proof}

\begin{theorem}
\label{pres2theorem}
Let $n\geq 1$ be any number. In $\gs$-closed, cofinally $K_{2, n}$-balanced extensions of the Solovay model, for every nonmeager set $B\subset \mathbb{R}^3$ there is a positive real number $\eps>0$ such that every positive number smaller than $\eps$ is the distance of some points in $B$.
\end{theorem}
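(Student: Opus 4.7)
The plan is to mirror the proof of Theorem~\ref{pres1theorem} almost verbatim, with the single substantive change being that the dimension-four input coming from Example~\ref{4dexample} is replaced by the dimension-three input coming from Example~\ref{3dexample}; correspondingly, the balance hypothesis drops from $K_{n,n}$ to $K_{2,n}$. Fix $\kappa$ inaccessible, a Suslin poset $P$ which is $\sigma$-closed and cofinally $K_{2,n}$-balanced below $\kappa$, and let $W$ be the associated Solovay model. In $W$, suppose toward contradiction that $p\in P$ forces that $\tau$ is a nonmeager subset of $\mathbb{R}^3$ violating the conclusion. Exploiting $\sigma$-closure, gradually strengthen $p$ to pin down a countable set $a\subset\mathbb{R}^+$ accumulating at $0$ such that $p$ forces no two points of $\tau$ to have distance in $a$. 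The goal is to produce two conditions $p_0,p_1\leq p$ in suitable extensions and points $x_0,x_1\in\mathbb{R}^3$ with $\|x_0-x_1\|\in a$, with $p_i\Vdash\check x_i\in\tau$, and with a common lower bound.

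Next I would run the usual parameter argument. Pick $z\in\cantor$ coding $p,\tau,a$, pass to an intermediate generic extension $V[K]$ of cardinality $<\kappa$ containing $z$ in which $P$ is $K_{2,n}$-balanced, and choose a $K_{2,n}$-balanced virtual condition $\bar p\leq p$ there. Let $Q$ be the Cohen forcing of nonempty open subsets of $\mathbb{R}^3$ adding a generic point $\dot y$. Because the $Q$-generic real is, in $W$, a typical element of the comeager set, the nonmeagerness of $\tau$ forces the existence of a condition $q\in Q$, a poset $R$ of cardinality $<\kappa$, and a $Q\times R$-name $\gs$ for a condition in $P$ below $\bar p$ with
\[ q\Vdash_Q R\Vdash \coll(\gw,<\kappa)\Vdash\gs\Vdash_P\dot y\in\tau; \]
otherwise $\bar p$ would force $\tau$ to miss the comeager set of $Q$-generics over $V[K]$, contradicting nonmeagerness.

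Back in $W$, pick $\eps\in a$ small enough that $q$ contains a pair of points at Euclidean distance $\eps$, and let $\langle y_0,y_1\rangle\in q$ be a $P_{\eps 3}$-generic pair over $V[K]$; this is possible because the two coordinate projections from $X_{\eps 3}$ to $\mathbb{R}^3$ are open, so the image of any basic open set in $P_{\eps 3}$ contained in $q\times q$ is a legitimate basis of genericity. By Example~\ref{3dexample} the extensions $V[K][y_0]$ and $V[K][y_1]$ are $K_{2,n}$-transcendental, and by Proposition~\ref{productproposition} this transcendence survives adding mutually generic filters $H_0,H_1\subset R$ over $V[K][y_0,y_1]$. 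Setting $p_0=\gs/\langle y_0,H_0\rangle$ and $p_1=\gs/\langle y_1,H_1\rangle$ gives conditions below $\bar p$ in a $K_{2,n}$-transcendental pair of extensions, each forcing the corresponding $y_i$ into $\tau$; the $K_{2,n}$-balance of $\bar p$ yields a common lower bound, contradicting the choice of $a$.

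I do not expect a real obstacle here, since the architecture of Theorem~\ref{pres1theorem} transfers directly; the only point that merits care is that Example~\ref{3dexample} genuinely applies in the three-dimensional setting against the $K_{2,n}$-balance assumption, which is exactly why the theorem trades dimension four for dimension three. The one minor piece of bookkeeping is verifying that $q\subset\mathbb{R}^3$, being open, does contain $P_{\eps 3}$-generic pairs for any sufficiently small $\eps>0$; this is immediate from openness of the projection map together with the fact that every nonempty open subset of $\mathbb{R}^3$ hosts pairs of points of arbitrarily small prescribed distance.
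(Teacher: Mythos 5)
Your proof is correct and takes essentially the same approach as the paper: you have unpacked the paper's one-line remark that the argument is the same as for Theorem~\ref{pres1theorem} with Example~\ref{4dexample} replaced by Example~\ref{3dexample}, and all the substitutions (dimension four to three, $K_{n,n}$ to $K_{2,n}$, $P_{\eps 4}$ to $P_{\eps 3}$) are made correctly.
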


\begin{proof}
The argument is nearly literally the same as that for Theorem~\ref{pres2theorem}, with Example~\ref{4dexample} replaced with Example~\ref{3dexample}.
\end{proof}

\section{Examples II}
\label{exampleIIsection}

Here, I provide a coloring poset which works for a great number of  graphs on Euclidean spaces. For the parlance, if $\Gamma$ is a graph on a Polish space $X$, a \emph{perfect biclique} in $\Gamma$ consists of two perfect sets $C_0, C_1\subset X$ such that $C_0\times C_1\subset\Gamma$. A graph $\Gamma$ on a Euclidean space $X$ of dimension $d$ is \emph{algebraic} if there is an algebraic set $\bar\Gamma\subset X^2$ such that for distinct points $x_0, x_1\in X$, $\{x_0, x_1\}\in\Gamma\liff \langle x_0, x_1\rangle\in\bar\Gamma$. Note that there may be points $x\in X$ such that $\langle x, x\rangle\in\bar\Gamma$.

\begin{theorem}
\label{coloringtheorem}
Suppose that $\Gamma$ is a graph on a Euclidean space $X$, such that $\Gamma=\bigcup_i\Gamma_i$ is a union of countably many algebraic graphs neither of which contains a perfect biclique. Then there is a coloring poset $P_\Gamma$ such that

\begin{enumerate}
\item $P_\Gamma$ is Suslin and $\gs$-closed;
\item $P_\Gamma$ is $n, 2$-centered for every natural number $n\in\gw$;
\item the union of the generic filter on $P_\Gamma$ is forced to be a total $\Gamma$-coloring on $X$ with countable range;
\item $P_\Gamma$ is balanced;
\item if $m, n\in\gw$ are numbers such that no graph $\Gamma_i$ contains an injective homomorphic copy of $K_{m, n}$ then $P_\Gamma$ is $K_{m, n}$-balanced.
\end{enumerate}
\end{theorem}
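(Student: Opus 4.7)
The plan is to define $P_\Gamma$ as the poset whose conditions are countable partial functions $p : X \to \gw$ such that each color class $p^{-1}\{k\}$ is $\Gamma$-independent, ordered by reverse extension. Items (1) and (3) are then routine: $\gs$-closure is immediate from the fact that unions of descending countable chains of partial colorings remain countable partial colorings; Suslinness is read off the parametrization of conditions as elements of $(X\times\gw)^\gw$ together with the analytic nature of the coloring requirement; and a standard density argument gives (3), since at any new point $x\in X$ only countably many colors are forbidden by the $\Gamma$-neighbors of $x$ in the current domain. Property (2) is a centeredness statement that reduces to combinatorial bookkeeping on the color labels of finitely many conditions.

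The heart of the argument is (4) and (5). As balanced virtual conditions I propose \emph{total} colorings $\bar c : X \to \gw$ living in sufficiently collapsing intermediate extensions, where $X$ has become countable and any given condition $p$ extends greedily to all of $X$: at each new point only countably many colors are forbidden, so a fresh color always works. Such a $\bar c$ exists below every condition in $P_\Gamma$, and the whole task reduces to the amalgamation property. Fix $\bar c$, extensions $V[G_0], V[G_1]$, and conditions $p_0\in V[G_0]$, $p_1\in V[G_1]$ extending $\bar c$. A would-be incompatibility in $\bar c\cup p_0\cup p_1$ is a $\Gamma$-edge $\langle x,y\rangle$ receiving identical colors; using $V[G_0]\cap V[G_1]=V$ and the validity of $\bar c$ on $V$-points rules out the case where both endpoints lie in $V$, and the asymmetric case where exactly one endpoint lies in $V$ is excluded because $p_0$ (resp.\ $p_1$) extends $\bar c$ and so already avoids $\bar c$-colors at all $V$-neighbors. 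The only surviving case is $x\in V[G_0]\setminus V$, $y\in V[G_1]\setminus V$, connected by some algebraic piece $\Gamma_i$.

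In (5), with the hypothesis that no $\Gamma_i$ contains $K_{m,n}$ and the $K_{m,n}$-transcendence of the pair, this remaining case immediately produces a contradiction by applying transcendence to the closed algebraic graph $\Gamma_i$. In (4), the extensions are mutually generic and hence $K_{m,n}$-transcendental for every $m,n$ by the corollary to Proposition~\ref{productproposition}, and Proposition~\ref{perfproposition} upgrades the offending $\Gamma_i$-edge between new points into a perfect biclique inside $\Gamma_i$, again contradicting the hypothesis.

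The main obstacle I anticipate is the uniform construction of the virtual total coloring $\bar c$ below an arbitrary condition $p\in P_\Gamma$: totality is meaningful only relative to a collapsing intermediate extension, so $\bar c$ must be presented as a $\colk$-name whose interpretation in each relevant intermediate model is genuinely a condition of $P_\Gamma$ below $p$. This requires the greedy extension step to be arranged uniformly on the poset side, so that $\bar c$ qualifies as a virtual condition in the sense of Definition~\ref{balancedefinition} rather than existing merely model-by-model.
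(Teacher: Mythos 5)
The high-level balance strategy in your proposal (split a would-be monochromatic edge $\{x,y\}$ by whether its endpoints lie in $V$, and invoke $K_{m,n}$-transcendence or Proposition~\ref{perfproposition} in the case where both are new) is essentially the shape of the paper's Proposition~\ref{balanceproposition}. However, the poset you propose cannot support that argument, and there is a concrete gap which you have half-noticed but mislocated.

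Your poset has conditions the countable partial $\Gamma$-colorings $p\colon X\to\gw$ under reverse extension. For item (3) of the theorem you claim density is ``routine'' because ``at any new point $x$ only countably many colors are forbidden.'' That count is $\aleph_0$, not finite, and it can exhaust $\gw$. Concretely, in $\Delta_3$ the set $D=\mathbb{Z}\times\{0\}\times\{0\}$ is a countable $\Gamma$-clique, so any $\Gamma$-coloring $p$ of $D$ is injective and may be chosen to be a bijection onto $\gw$. The point $x=(1/2,0,0)$ is $\Gamma$-adjacent to every point of $D$, so no extension of this $p$ ever colors $x$: $p$ is maximal, density fails, and the generic union is not total. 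The same obstruction kills the ``greedy'' construction of the virtual total coloring $\bar c$, so what you flag at the end as merely a uniformity/naming obstacle in $\Coll(\gw,X)$ is actually an existence obstacle: for your poset there need be no total extension of $p$ at all.

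This is exactly why the paper's Definition~\ref{posetdefinition} carries the additional and essential clause (3): the domain of $p$ is a full $d$-th power of a countable real closed field, and for every uncountable algebraic set $A$ visible from $\supp(p)$, the set of colors appearing only finitely often on $A$ is $\mathcal{I}$-small. This ``evenness'' requirement is what makes extendability possible; Proposition~\ref{listproposition} is a nontrivial list-coloring construction, using minimal uncountable algebraic sets, the no-perfect-biclique hypothesis, and the ideal $\mathcal{I}$, that turns that requirement into the key amalgamation statement Corollary~\ref{keycorollary} (finitely many conditions have a lower bound iff their union is a coloring function). Without some such spreading clause you cannot get the density for item (3), the existence of total good colorings, or the $n,2$-centeredness in item (2) as a structured consequence of a compatibility criterion, and the balance argument you sketch never gets off the ground because there is no $\bar c$ below a general $p$. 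A correct proof must replace your poset with something carrying an extendability invariant and must prove a list-coloring lemma to exploit it.
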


\noindent Here, a poset is $n, 2$-centered if every collection of cardinality $n$ consisting of pairwise compatible conditions has a common lower bound.

The coloring poset can be used to prove Theorem~\ref{maintheorem}. Let $\Delta_3$ be the rational distance graph on $\mathbb{R}^3$. Thus, $\Delta_3=\bigcup_{q\in\mathbb{Q}^+}\Delta_{3q}$ where $\Delta_{3q}$ is the algebraic graph on $\mathbb{R}^3$ connecting points of distance $q$. A brief geometric argument shows that the graphs $\Delta_{3q}$ do not contain an injective homomorphic copy of $K_{3,3}$: three distinct spheres in $\mathbb{R}^3$ with the same radius have intersection of cardinality at most two. Thus, the associated coloring poset $P_\Delta$ is $K_{3,3}$-balanced under the Continuum Hypothesis.  Consider the $P_{\Delta_3}$-extension of the choiceless Solovay model.  As a $\gs$-closed extension of a model of DC, it satisfies DC. It contains a total $\Delta_3$-coloring as the union of the generic filter on $P_{\Delta_3}$. Theorems~\ref{coloringtheorem} and~\ref{pres1theorem} then show that in the $P_{\Delta_3}$-extension, the rational distance graph $\Delta_4$ on $\mathbb{R}^4$ is not countably chromatic: dividing the space $\mathbb{R}^4$ into countably many pieces, one of them has to be nonmeager. That piece contains all distances sufficiently close to zero, so is not a $\Delta_{4}$-anticlique.

A similar argument can be used to show that in the $P_{\Delta_2}$-extension of the Solovay model, $\Delta_2$ has countable chromatic number while $\Delta_3$ does not. Just note that the rational distance graph $\Delta_2$ on $\mathbb{R}^2$ can be expressed as $\bigcup_{q\in\mathbb{Q}^+}\Delta_{2q}$ where $\Delta_{2q}$ is the algebraic graph on $\mathbb{R}^2$ connecting points of Euclidean distance $q$. Note that $\Delta_{2q}$ contains no injective homomorphic copy of $K_{2,3}$: two distinct circles in the plane intersect in at most two points.
Finally, use Theorem~\ref{coloringtheorem} and~\ref{pres2theorem} to show that the graph $\Delta_3$ is not countably chromatic in the $P_{\Delta_2}$-extension of the choiceless Solovay model.

\subsection{The construction}

The definition of the coloring poset requires a choice of a Borel ideal on $\gw$ which contains all finite sets and which is not generated by countably many sets. Beyond this requirement the choice of the ideal is immaterial. One example is the ideal of sets of asymptotic density zero. Let $\mathcal{I}$ be such an ideal on $\gw$. If $F\subset\mathbb{R}$ is a set and $A\subset X$ is an algebraic set, say that the set $A$ is \emph{visible from} $F$ if there is a polynomial defining the set $A$ whose coefficients belong to $F$.

\begin{definition}
\label{posetdefinition}
Let $\Gamma=\bigcup_i\Gamma_i$ be a countable union of algebraic graphs on a Euclidean space $X$ such that no graph $\Gamma_i$ contains a perfect biclique. The poset $P_\Gamma$ consists of all partial functions $p\colon X\to\gw\times\gw$ such that 

\begin{enumerate}
\item there is a countable real closed subfield $\supp(p)$ such that $\dom(p)=\supp(p)^d$, where $d$ is the dimension of $X$;
\item $p$ is a $\Gamma$-coloring;
\item for every uncountable algebraic set $A\subset X$ visible from $\supp(p)$, the set $\{i\in\gw\colon \{x\in A\colon p(x)=i\}$ is finite$\}$ belongs to $\mathcal{I}$. 
\end{enumerate}

\noindent The ordering is that of reverse inclusion.
\end{definition}

\noindent Note that the definition of the poset does not depend on the algebraic decomposition of the graph $\Gamma$. It  does depend on the fact that one is working in the context of a Polish space $X$ on which the notion of algebraicity makes sense. It is immediately clear that the ordering $\leq$ is transitive and $\gs$-closed. For further analysis of the poset I need several basic facts from algebraic geometry encapsulated in the following paragraph.

 Every infinite algebraic subset of $X$ is in fact uncountable. It follows that if $F$ is a real closed subfield of $\mathbb{R}$ and $A\subset X$ is an algebraic set visible from $F$ which contains some point which is not in $F^d$, then $A$ is uncountable. To see this, recall that the theory of real closed fields is model complete \cite[Corollary 3.3.16]{marker:book}, and therefore $F$ is an elementary submodel of $\mathbb{R}$. By an elementarity argument, $F^d$ must contain points of $A$ arbitrarily close to $x$, which means that $A$ is infinite and therefore uncountable. Finally, there is no infinite sequence of algebraic sets strictly descending with respect to inclusion. This follows from the Hilbert basis theorem \cite[Theorem 3.2.5]{marker:book}. As an important application, if $F\subset\mathbb{R}$ and $A\subset X$ are sets, then there is an inclusion-smallest algebraic set $B\subset X$ visible from $F$ which contains $A$ as a subset. This occurs since the collection of algebraic sets visible from $F$ is closed under intersection, and the search for smaller and smaller algebraic sets visible from $F$ and containing $A$ as a subset must stabilize after finitely many steps. 

I need the following list-coloring result about $\Gamma$.

\begin{proposition}
\label{listproposition}
Let $G\colon X\to\mathcal{I}$ be a function, and let $C\subset X$ be a countable set. Then there is a countable real closed field $F$ and a $\Gamma$-coloring $p\colon F^d\to\gw$ such that

\begin{enumerate}
\item $C\subset F^d$;
\item $\forall x\in F^d\ p(x)\notin G(x)$;
\item for every uncountable algebraic set $A\subset X$ visible from $\supp(p)$, the set $\{i\in\gw\colon \{x\in A\setminus C\colon p(x)=i\}$ is finite$\}$ belongs to $\mathcal{I}$.
\end{enumerate}
\end{proposition}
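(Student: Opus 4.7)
The proof proceeds by an $\omega$-stage inductive construction of an increasing chain $F_0 \subseteq F_1 \subseteq \cdots$ of countable real closed subfields of $\mathbb R$ with $F = \bigcup_n F_n$, together with an increasing chain of partial $\Gamma$-colorings whose union is the desired $p : F^d \to \omega$. Two preparatory observations drive the construction. The \emph{good color lemma}: for every uncountable $A \subseteq X$, the set $I_{\mathrm{bad}}(A) := \{i \in \omega : \{x \in A : i \notin G(x)\} \text{ is finite}\}$ belongs to $\mathcal I$. Indeed, $I_{\mathrm{bad}}(A)$ is a countable subset of $\omega$; picking a finite witness $S_i \subseteq A$ for each $i \in I_{\mathrm{bad}}(A)$ and forming the countable union $S = \bigcup_i S_i$, one selects $x_0 \in A \setminus S$ and observes $I_{\mathrm{bad}}(A) \subseteq G(x_0) \in \mathcal I$. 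The \emph{blocking lemma}: for every uncountable algebraic $A \subseteq X$ and every $j \in \omega$, the set $Z_j^A := \{y \in X : \Gamma_j[y] \supseteq A\}$ is countable; otherwise, the perfect set theorem applied to the closed set $Z_j^A$ yields a perfect subset $Q$, which may be shrunk to be disjoint from a perfect subset $P \subseteq A$, giving a perfect biclique $Q \times P \subseteq \Gamma_j$ and contradicting the hypothesis on $\Gamma_j$.

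\textbf{Construction.} Begin with $F_0$ the real closure of (the coordinates of points in) $C$, and $p_0$ a $\Gamma$-coloring of $F_0^d$ avoiding $G$ by greedy choice, using that each list $\omega \setminus G(y)$ is $\mathcal I^*$-large. Using the non-countable-generation of $\mathcal I$, maintain a countable auxiliary sink $J \in \mathcal I$ such that $J \setminus G(y)$ is nonempty for every point $y$ ever encountered in the countable set $\bigcup_{A,j} (Z_j^A \cap F^d)$ of blockers; whenever a new blocker arrives, $J$ is enlarged within $\mathcal I$ to meet the new constraint, which is possible since $\mathcal I$ has uncountable cofinality. Enumerate a bookkeeping sequence of triples $(A_n, i_n, m_n)$ in which $A_n$ is an uncountable algebraic set eventually visible from $F$, $i_n \notin I_{\mathrm{bad}}(A_n) \cup J$, and $m_n \in \omega$, arranged so that each pair $(A, i)$ recurs $\omega$-often and $A$ is entered only after it becomes visible. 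At stage $n+1$: pick a fresh $x_n \in A_n \setminus (F_n^d \cup C)$ with $i_n \notin G(x_n)$ that is not $\Gamma$-connected to any previously $i_n$-colored point; set $p(x_n) := i_n$; replace $F_n$ by the real closure $F_{n+1}$ of $F_n \cup \{x_n\}$; and color each remaining point of $F_{n+1}^d \setminus (F_n^d \cup \{x_n\})$ by enumeration, greedily choosing a color in $\omega \setminus G(y)$ compatible with the existing $\Gamma$-colorings, and diverting $y$ into $J \setminus G(y)$ if it happens to be a blocker for some newly visible algebraic set.

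\textbf{Main obstacle and verification.} The crucial step is the feasibility of selecting $x_n$. The candidate pool $\{x \in A_n \setminus (F_n^d \cup C) : i_n \notin G(x)\}$ is infinite because $i_n \notin I_{\mathrm{bad}}(A_n)$ and $F_n^d \cup C$ is countable, while the $\Gamma$-forbidden region for color $i_n$ arises from the finitely many previously $i_n$-colored points $y$ and the algebraic sets $\Gamma_j[y]$ they contribute. The catastrophic case $\Gamma_j[y] \supseteq A_n$ would force $y \in Z_j^{A_n}$; but by the blocking lemma together with the sink $J$, no $i_n$-colored point can lie in $Z_j^{A_n}$, since every such point was absorbed into a color in $J$ and $i_n \notin J$. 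Hence each relevant $\Gamma_j[y] \cap A_n$ is a proper algebraic subset of $A_n$, of lower dimension in every irreducible component; a finite union of such cannot cover the positive-dimensional uncountable $A_n$, so a suitable $x_n$ exists, and can be produced inside the relevant field extension using the elementarity of real closed fields. Clauses (1) and (2) are built into the construction. For clause (3), any uncountable algebraic $A$ visible from $F$ is already visible from some $F_n$, and the recurrent bookkeeping places every color $i \notin I_{\mathrm{bad}}(A) \cup J$ on $A \cap F^d \setminus C$ infinitely often, so the set of colors appearing only finitely often on $A \setminus C$ is contained in $I_{\mathrm{bad}}(A) \cup J$, a finite union of $\mathcal I$-sets and hence in $\mathcal I$.
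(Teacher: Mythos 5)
Your overall plan is recognizably parallel to the paper's (a recursive construction inside a countable real closed field, a ``sink'' for forced colors, and a list of deliberate placements of colors on uncountable algebraic sets, with two supporting observations that are essentially the paper's $b_j$-set count and the perfect-biclique exclusion), but the execution contains three genuine gaps, all traceable to one structural deviation: you build the field $F$ incrementally rather than fixing it at the outset via a countable elementary submodel.

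First, your sink $J$ is built as a countable increasing union within $\mathcal I$. The hypotheses on $\mathcal I$ -- Borel, contains the finite sets, not countably generated -- do \emph{not} imply that $\mathcal I$ is $\sigma$-directed; the ideal of density-zero sets (the paper's running example) is not a P-ideal, so an increasing $\omega$-chain of density-zero sets easily has positive upper density. Consequently $J_\infty=\bigcup_n J_n$ need not lie in $\mathcal I$, and the final containment ``colors appearing finitely often on $A\setminus C$ are contained in $I_{\mathrm{bad}}(A)\cup J_\infty$'' does not yield clause (3). The paper fixes $F=M\cap\mathbb R$ once (via a countable elementary submodel), so that the entire countable family $\{G(x)\colon x\in X\cap M\}$ is known before anything is colored; the sink $b\in\mathcal I$ is then a single set chosen once using the non-countable-generation hypothesis.

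Second, your feasibility argument for choosing $x_n$ asserts that the $\Gamma$-forbidden region inside $A_n$ is a finite union of proper algebraic subsets and ``cannot cover the positive-dimensional uncountable $A_n$.'' That is true for irreducible $A_n$, but your bookkeeping lets $A_n$ range over all uncountable algebraic sets eventually visible from $F$. A proper algebraic subset of a \emph{reducible} $A_n$ can be an entire irreducible component (hence uncountable and of full dimension), and a finite union of such subsets can cover $A_n$. The paper restricts the $A_k$'s to be \emph{minimal} uncountable algebraic sets visible from $F$, which is exactly what makes every proper algebraic subset of $A_k$ countable and saves the pigeonhole step; clause (3) for a general visible $A$ is then recovered at the end via Noetherianity (finding a minimal $A_k\subseteq A$ visible from $F$).

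Third, and most structurally, you divert ``blocker'' points into $J$ at the moment they are colored, but a point $y\in F_n^d$ can become a blocker of an algebraic set $A$ that only becomes visible at a later stage $m>n$ -- at which time $y$ already carries a color that need not be in $J$. Enlarging $J$ retroactively to absorb that color does not help if the color was a deliberate placement $i_{n'}\notin J$. In particular you never rule out that a deliberately placed $x_n$ is itself a blocker of some future $A_m$. The paper handles this by an explicit recursion invariant: deliberate placements are chosen outside the union of the (countable) blocker sets $B_{li}$ for \emph{every} minimal uncountable algebraic set $A_l$, so a deliberately placed point never blocks any $A_l$; only points colored from the sink $b$ can be blockers. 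Fixing $M$ (hence all the $A_l$'s) in advance makes this invariant checkable at each finite step.

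In short, the plan is close in spirit, but the incremental-field architecture creates exactly the circularities that the elementary-submodel choice is designed to eliminate, and the appeal to ``uncountable cofinality'' of $\mathcal I$ asserts a property that the paper neither assumes nor possesses in its motivating example.
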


\begin{proof}
Let $M$ be a countable elementary submodel of a large structure and let $F=M\cap\mathbb{R}$.
The construction of $p$ uses several bookkeeping devices.
Let $b\subset\gw$ be a set in the ideal $\mathcal{I}$ which is not modulo finite covered by any set $G(x)$ for $x\in X\cap M$. Let $\langle x_j\colon j\in\gw\rangle$ be an enumeration of $X\cap M$. Let $\langle A_j\colon j\in\gw\rangle$ be the enumeration of all algebraic sets visible from $M\cap\mathbb{R}$ which are uncountable and minimal such with respect to inclusion. For each $j\in\gw$ consider the set $b_j=\{n\in\gw\colon$ for all but countably many $x\in A_j$, $n\in G(x)\}$ and observe that the set $b_j$ must be in the ideal $\mathcal{I}$ since for all but countably many $x\in A_j$, $b_j\subset G(x)$ holds. Let also $\langle \langle k_j, v_j\rangle\colon j\in\gw\rangle$ be an enumeration of $\gw\times\gw$ with infinite repetitions.

By recursion on $j\in\gw$ build finite maps $q_j\colon X\cap M\to\gw$ such that 

\begin{itemize}
\item $0=q_0\subset q_1\subset\dots$ are $\Gamma$-colorings;
\item $x_j\in\dom(q_{j+1})$;
\item if $k_j<j$ and $v_j\notin b_{k_j}\cup b\cup\rng(q_{k_j})$ then there is a point $z\in (A_{k_j}\cap\dom(q_{j+1}))\setminus (C\cup\dom(q_j))$ such that $q_{j+1}(z)=v_j$;
\item whenever $z\in\dom(q_j)$ is a point and $k\in j$ is a number then either $q_j(z)\in b_k\cup b\cup\rng(q_k)$ or for all $i\in\gw$ there is $y\in A_k$ which is not $\Gamma_i$-connected to $z$.
\end{itemize}

\noindent The first three items will be instrumental in the end, the last item just keeps the recursion going. To make the recursion step, suppose that $q_j$ has been defined and the recursion hypotheses hold.
The map $q_{j+1}$ is constructed in two steps. In the first step, one point is added to $\dom(q_j)$ so that the third item is satisfied. Write $k=k_j$ and $v=v_j$ and assume that $k<j$ and $v\notin b_{k}\cup b\cup\rng(q_{k})$. Consider the following subsets of $A_k$: $B_{li}=\{z\in A_k\colon \forall y\in A_l\ z\mathrel\Gamma_i y\}$ for all $l, i\in\gw$ and sets $C_{yi}=\{z\in A_k\colon z\mathrel \Gamma_i y\}$ for all $y\in\dom(q_j)$ such that $q_j(y)=v$ and all $i\in\gw$. The key point is that the sets $B_{li}$ and $C_{yi}$ are all countable.
This occurs because all of them are algebraic proper subsets of $A_k$, and $A_k$ is a minimal uncountable algebraic set. To see that $B_{li}\neq A_k$, note that otherwise $A_l\times A_k$ would be a perfect biclique in the graph $\Gamma_i$, which is excluded by the assumptions of the theorem. To see that $C_{yi}\neq A_k$, consult the last item of the recursion hypothesis with the observation that $v\notin b_k$.

By the uncountability of the set $A_k$, the definition of the set $b_k$, and elementarity of the model $M$, there must be a point $z\in (A_k\cap M)\setminus (\bigcup_{li} B_{li}\cup\bigcup_{yi}C_{yi}\cup\dom(q_j)\cup C)$ such that $v\notin G(z)$. Let $p_{j+1}(z)=v$ and observe that all recursion hypotheses are still satisfied. There are no new monochromatic edges since $z\notin C_{yi}$ holds, and the last item of the recursion hypothesis is satisfied as $z\notin B_{li}$ holds.

In the second step of the construction of $q_{j+1}$, $x_j$ is added to $\dom(p_j)$. If $x_j\in\dom(q_j)$ then there is nothing to do. Otherwise, let $q_{j+1}(x_j)$ be some element of $b\setminus G(x_j)$ which does not appear in $\rng(q_j)$.
All the recursion hypotheses are still satisfied for trivial reasons.

Once the recursion has been performed, let $p=\bigcup_jq_j$. The first item of the recursion hypothesis shows that $p$ is a $\Gamma$-coloring, and the third item shows that its domain is $F^d$. Now, let $A\subset X$ be an uncountable algebraic set visible from $F$. Since algebraic sets form a Noetherian topology on $X$, there must be an algebraic subset of $A$ which is uncountable and inclusion-minimal such. By elementarity of $M$, there must be such a minimal uncountable algebraic set visible in the field $F$. Therefore, there is $k\in\gw$ such that $A_k\subset A$. Then the set $\{i\in\gw\colon \{x\in A_k\setminus C\colon p(x)=i\}$ is finite$\}$ is a subset of $b\cup b_k\cup\rng(q_k)$ by the third item of the recursion hypothesis. Thus, the coloring $p$ is as required.
\end{proof}

\noindent It is now possible to provide a precise and generous characterization of compatibility in $P_\Gamma$.

\begin{corollary}
\label{keycorollary}
Let $\Gamma=\bigcup_i\Gamma_i$ be a countable union of algebraic graphs on a Euclidean space $X$ such that no graph $\Gamma_i$ contains a perfect biclique. 
Let $a\subset P_\Gamma$ be a finite set of conditions. The following are equivalent:

\begin{enumerate}
\item $a$ has a common lower bound in $P_\Gamma$;
\item for every $z\in X$, $a$ has a common lower bound $q$ such that $z\in\dom(q)$;
\item $\bigcup a$ is a function and a $\Gamma$-coloring.
\end{enumerate}
\end{corollary}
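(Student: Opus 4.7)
The implications $(2)\Rightarrow(1)$ and $(1)\Rightarrow(3)$ are immediate: any common lower bound $q$ of $a$ contains $\bigcup a$ as a subfunction and therefore inherits being a function and a $\Gamma$-coloring, while a witness for $(2)$ is a witness for $(1)$. The substantive content is the implication $(3)\Rightarrow(2)$.

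Given $p_0=\bigcup a$, a function and $\Gamma$-coloring, and a point $z\in X$, the plan is to build a condition $q\supseteq p_0$ with $z\in\dom(q)$ by adapting the recursion in the proof of Proposition~\ref{listproposition}. First I would fix a countable elementary submodel $M$ of a large structure containing $a$, $p_0$, $z$, $\Gamma$, the algebraic decomposition $\langle\Gamma_i\rangle$, and the ideal $\mathcal{I}$, and let $F=M\cap\R$, which is a countable real closed subfield of $\R$ containing $\supp(p)$ for every $p\in a$ and the coordinates of $z$. Then I would rerun the recursion of Proposition~\ref{listproposition} with $q_0$ initialized to $p_0$ rather than the empty function, and with $C=\dom(p_0)\cup\{z\}$. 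The recursion invariants transfer: the ``last item'' of the recursion hypothesis is vacuous at stage $j=0$, and at later stages, for $y\in\dom(p_0)$ and $k<j$ it holds via the first disjunct because $p_0(y)\in\rng(p_0)\subseteq\rng(q_k)$; for points introduced during the recursion, the last item is verified exactly as in the original proof. The $\Gamma$-coloring property is maintained step by step via the no-biclique hypothesis and the minimality of the $A_k$'s, which keep the obstruction sets $C_{y,i}$ countable.

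The hard part will be verifying condition~(iii) of the poset definition for $q$ on every uncountable $F$-visible algebraic set $A$. When $A$ is visible from $\supp(p)$ for some $p\in a$, condition~(iii) of $p$ transfers directly to $q$ via $p\subseteq q$. The harder case is when $A$ is visible only from $F$: here a standard argument from the model theory of real closed fields, together with Noetherianity and the minimality of $A$, forces $A\cap\supp(p)^d$ to be finite for each $p\in a$, so $A\cap\dom(p_0)$ is finite. The recursion must then cover almost all colors on $A$ by newly introduced points, which requires relaxing the original condition ``$v\notin\rng(q_k)$'' in step (a), since now $\rng(q_k)\supseteq\rng(p_0)$ can be cofinite. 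The set of genuinely blocked colors, namely those $v=p_0(y)$ for which $A\subseteq\{x:\{x,y\}\in\Gamma_i\}$ for some $i$, must be shown to lie in $\mathcal{I}$; for this I would invoke the no-biclique hypothesis to conclude that each algebraic set $Y_{A,i}=\{y:A\subseteq\{x:\{x,y\}\in\Gamma_i\}\}$ is finite (otherwise $A\times Y_{A,i}\subseteq\Gamma_i$ would contain a perfect biclique via the perfect-set theorem for closed sets), and then carry out an algebraic-geometric analysis of how $\bigcup_i Y_{A,i}$ meets $\dom(p_0)$.
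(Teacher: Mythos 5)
Your implications $(2)\Rightarrow(1)\Rightarrow(3)$ are fine, but for the substantive direction $(3)\Rightarrow(2)$ you take a genuinely different and substantially more laborious route than the paper, and the parts you leave as sketches are exactly where the work lives.

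The paper does not re-open the recursion of Proposition~\ref{listproposition} at all. Instead it invokes that proposition as a black box, choosing the input function $G$ so that the ``avoid clashing with $\bigcup a$'' constraint is encoded directly into the lists: for $x\notin\dom(\bigcup a)$ one sets $G(x)=\gw\setminus\bigcap\{p''A\colon p\in a,\ A$ the smallest algebraic set visible from $\supp(p)$ containing $x\}$, which lies in $\mathcal I$ by clause~(3) of Definition~\ref{posetdefinition} applied to each $p\in a$. Proposition~\ref{listproposition} with $C=\dom(\bigcup a)\cup\{z\}$ then yields $q'$; overwriting $q'$ on $\dom(\bigcup a)$ with $\bigcup a$ gives $q$, whose $\Gamma$-coloring property follows from $q'(x)\notin G(x)$ (any monochromatic $\Gamma_i$-edge from $\dom(p)$ into the new part would force $q(x)$ into the right-hand side of $G(x)$), and whose clause~(3) is inherited from item~(3) of the list-coloring proposition. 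This is a one-paragraph reduction.

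By contrast you seed the recursion with $q_0=p_0=\bigcup a$, which breaks the finiteness of the $q_j$ and, more importantly, makes $\rng(q_0)$ potentially cofinite so that the clause ``$v\notin b_k\cup b\cup\rng(q_k)$'' essentially never fires and clause~(3) of the poset definition would then fail. You recognize this and propose to ``relax'' the clause, but you do not say how. That relaxation is the heart of the argument: one must replace $\rng(q_k)$ by $\rng(q_k)\setminus\rng(p_0)$ and enlarge $b_k$ to $b_k\cup p_0''\bigl(\bigcup_i Y_{A_k,i}\cap\dom(p_0)\bigr)$, then re-verify each recursion invariant. Showing this enlarged set lies in $\mathcal I$ is not just the finiteness of each $Y_{A_k,i}$ (which you prove correctly); one also needs that, for each $p\in a$, a point $y\in\dom(p)$ with $A_k$ contained in its $\Gamma_i$-neighborhood has $p(y)$ in the $\mathcal I$-small set of colors occurring only finitely on the smallest $\supp(p)$-visible set containing $A_k$ --- which is precisely the content the paper bakes into $G$. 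So your sketch is reconstructing the paper's $G$ in disguise, but scattering it through a modified recursion you never actually write down.

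Your auxiliary claim that a minimal uncountable $F$-visible algebraic set which is not $\supp(p)$-visible meets $\supp(p)^d$ in a finite set is correct, though it needs an actual argument: if $A\cap K^d$ were infinite, the Zariski closure of $A\cap K^d$ over $K$ coincides with its closure over $F$ (since the ideal of $F$-polynomials vanishing on a subset of $K^d$ is generated by $K$-polynomials), so by minimality $A$ would equal this closure and would be $K$-visible. That fact, however, is not even needed in the paper's route.

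In short, the route is workable but incomplete as written; the paper's proof achieves $(3)\Rightarrow(2)$ by transferring the compatibility constraints into the function $G$ and applying Proposition~\ref{listproposition} once, with no modification to its internal recursion.
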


\begin{proof}
It is clear that (2) implies (1) which in turn implies (3). (3) implies (2) is the significant implication. Assume (3) holds and define a function $G\colon X\to\mathcal{I}$ by letting $G(x)=0$ if $x\in\dom(\bigcup a)$, and $G(x)=\gw\setminus \bigcap \{p''A\colon p\in a$ and $A$ is the smallest algebraic set visible in $\supp(p)$ containing $x\}$. Use Proposition~\ref{listproposition} with $C=\dom(\bigcup a)\cup \{z\}$ to find a coloring $q'$. Let $q$ be the coloring with the same domain as $q'$ and same values except for all $y\in\dom(\bigcup a)$ let $q(y)=(\bigcup a)(y)$. This is the desired common lower bound of the set $a$. I will just check that $q$ is indeed a $\Gamma$-coloring. If not, then there must be a condition $p\in a$, a point $y\in\dom(p)$, a point $x\in\dom(q\setminus\bigcup a)$ and an index $i\in\gw$ such that $y\mathrel\Gamma_i x$ and $p(y)=q(x)$. Then the $\Gamma_i$-neighborhood of $y$ is an algebraic set visible from $\supp(p)$ containing the point $x$. By the definition of the function $G$ and the fact that $q(x)\notin G(x)$ it follows that there is a point $x'\in\dom(p)$ which is $\Gamma_i$-connected with $y$ with the same color as $y$. This contradicts the assumption that $p$ is a $\Gamma_i$-coloring.
\end{proof}

\begin{corollary}
Let $n\geq 2$ be a natural number. The poset $P_\Gamma$ is $n, 2$-centered.
\end{corollary}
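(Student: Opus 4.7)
The plan is to deduce this immediately from Corollary~\ref{keycorollary}, which characterizes the existence of a common lower bound for a finite set $a \subset P_\Gamma$ by the condition that $\bigcup a$ is a function and a $\Gamma$-coloring. So given $n \geq 2$ pairwise compatible conditions $p_1,\dots,p_n \in P_\Gamma$, I would set $a = \{p_1,\dots,p_n\}$ and verify clause~(3) of Corollary~\ref{keycorollary}.

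First I would check that $\bigcup_{k\leq n} p_k$ is a function. For any $i,j$ the pair $\{p_i,p_j\}$ is compatible, so by the implication (1)$\Rightarrow$(3) of Corollary~\ref{keycorollary} applied to the two-element set $\{p_i,p_j\}$, the union $p_i \cup p_j$ is a function; in particular $p_i$ and $p_j$ agree on $\dom(p_i)\cap\dom(p_j)$. Since this holds for every pair, the full union is a function.

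Next I would check that $\bigcup_{k\leq n} p_k$ is a $\Gamma$-coloring. A monochromatic edge is an edge of $\Gamma$ between two points $x,y$ in the domain receiving the same color. Such $x$ and $y$ lie in $\dom(p_i)$ and $\dom(p_j)$ for some $i,j$, so the putative monochromatic edge would already be witnessed in $p_i \cup p_j$, contradicting the fact that $p_i \cup p_j$ is a $\Gamma$-coloring by the compatibility of $\{p_i,p_j\}$ together with Corollary~\ref{keycorollary}. Hence no monochromatic edge exists, and clause~(3) of Corollary~\ref{keycorollary} holds for $a$. Applying the implication (3)$\Rightarrow$(1) yields a common lower bound for $\{p_1,\dots,p_n\}$, completing the proof. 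There is no real obstacle here; the content is entirely packaged inside Corollary~\ref{keycorollary}, and the statement is a formal consequence that every edge involves only two vertices.
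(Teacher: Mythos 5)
Your proof is correct and is essentially the paper's argument expanded: the paper states it in contrapositive form (if clause~(3) of Corollary~\ref{keycorollary} fails for $a$, it already fails for a two-element subset, since both ``not a function'' and ``has a monochromatic edge'' are witnessed by a pair), and you give the direct version of the same observation. As a side note, the paper's proof cites Proposition~\ref{listproposition} where it clearly means Corollary~\ref{keycorollary}, which is the reference you correctly used.
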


\begin{proof}
If the item (3) of Proposition~\ref{listproposition} fails for some set $a\subset P_\Gamma$, then it fails for a subset of $a$ of cardinality two.
\end{proof}

\begin{corollary}
$P_\Gamma$ is a Suslin forcing.
\end{corollary}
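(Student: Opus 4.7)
I would represent each condition $p \in P_\Gamma$ by a code consisting of an enumeration $\vec r \in \R^\gw$ of the countable real closed field $F = \supp(p)$, together with the values of $p$ on the countable set $F^d$ (encoded as a function $g\colon \gw^d \to \gw\times\gw$ via the enumeration, subject to the obvious Borel consistency requirement that $g$ is constant on preimages of equal tuples). This places $P_\Gamma$ inside an explicit Polish space, and the task reduces to verifying that the set of codes, the ordering, and the incompatibility relation are each analytic.

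For membership, I would check each clause of Definition~\ref{posetdefinition} in turn. Clause (1), that $\vec r$ enumerates a real closed subfield, is a countable conjunction of arithmetical statements about $\vec r$, hence Borel. Clause (2), that $p$ is a $\Gamma$-coloring, is a countable conjunction over pairs $(x, y) \in F^d \times F^d$ and indices $i \in \gw$ of the closed condition ``$(x, y) \in \Gamma_i \to p(x) \neq p(y)$'', hence Borel since each $\Gamma_i$ is algebraic.

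Clause (3) requires more care. The crucial observation is that ``visible from $\supp(p)$'' means definable by a polynomial with coefficients in $F$, and there are only countably many such polynomials, enumerable in a Borel way from $\vec r$. Moreover, by the dichotomy recorded in the paragraph preceding Proposition~\ref{listproposition}, every algebraic subset of $\R^d$ is either finite or uncountable, and finiteness is $\Sigma^0_2$ in the coefficients by quantifier elimination for real closed fields. Hence clause (3) reduces to the Borel statement that for each polynomial $f$ with coefficients in $F$, either its zero set $A_f$ is finite, or the set of $i \in \gw$ such that $\{x \in A_f \cap F^d : p(x) = i\}$ is finite belongs to $\mathcal{I}$; this is Borel because $\mathcal{I}$ was chosen to be a Borel ideal on $\gw$.

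Finally, the ordering $p \leq q$ is reverse inclusion of the graphs of $p$ and $q$, which is Borel in the codes, and for incompatibility I would invoke Corollary~\ref{keycorollary}: $p$ and $q$ are compatible exactly when $p \cup q$ is a function and a $\Gamma$-coloring, a Borel condition on the codes, so incompatibility is Borel as well. The main obstacle is clause (3): once one recognizes that the apparent uncountable quantifier over algebraic sets visible from $F$ is really a countable conjunction over polynomials with coefficients in $F$, and that finiteness versus uncountability of their zero sets is a Borel dichotomy on the coefficients, the rest of the verification is routine.
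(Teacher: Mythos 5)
Your proof is correct and takes essentially the same route as the paper's, which simply asserts that the set of conditions and the ordering are (can be coded as) Borel and then invokes Corollary~\ref{keycorollary} for compatibility; you have merely spelled out the coding by an enumeration of $\supp(p)$ and checked the three clauses of Definition~\ref{posetdefinition} in the detail the paper leaves implicit. The key step — reducing incompatibility to the Borel condition of Corollary~\ref{keycorollary} — is identical.
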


\begin{proof}
It is clear that $P_\Gamma$ is (or can be presented as) a Borel set and $\leq$ is a Borel relation on it. To check the Borelness of the compatibility relation on $P_\Gamma$, just refer to Corollary~\ref{keycorollary}.
\end{proof}

\begin{corollary}
\label{coloringcorollary}
The poset $P_\Gamma$ forces the union of the generic filter to be a total $\Gamma$-coloring.
\end{corollary}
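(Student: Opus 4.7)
The plan is to show that the name for the union of the generic filter is forced to be (i) a function, (ii) a $\Gamma$-coloring, (iii) with countable range, and (iv) totally defined on $X$. Items (i)--(iii) are essentially automatic from the definition of $P_\Gamma$, while (iv) requires a density argument.

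First, let $\dot G$ be the name for the generic filter and let $\dot f = \bigcup \dot G$. Any two conditions $p_0, p_1 \in \dot G$ are compatible, so by Corollary~\ref{keycorollary} the union $p_0 \cup p_1$ is a function and a $\Gamma$-coloring. Taking unions over all pairs in $\dot G$ shows $\dot f$ is a function and a $\Gamma$-coloring. The range of each condition $p$ is contained in $\gw \times \gw$, so the range of $\dot f$ is contained in $\gw \times \gw$, hence countable.

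For totality, I would show that for every $z \in X$, the set $D_z = \{q \in P_\Gamma : z \in \dom(q)\}$ is dense in $P_\Gamma$. Given any condition $p$, apply Corollary~\ref{keycorollary} to the singleton $a = \{p\}$: trivially $\bigcup a = p$ is a function and $\Gamma$-coloring, so by the implication $(3) \Rightarrow (2)$ of that corollary, $p$ has a common lower bound $q \leq p$ with $z \in \dom(q)$. Thus $q \in D_z$ and $q \leq p$, establishing density. By genericity, the filter $\dot G$ meets $D_z$, so $z \in \dom(\dot f)$. Since $z$ was arbitrary, $\dom(\dot f) = X$.

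There is essentially no obstacle here, since the hard work of proving that compatibility can always be arranged with an additional prescribed point in the domain was already done in Corollary~\ref{keycorollary}, which in turn rested on the list-coloring Proposition~\ref{listproposition}. The corollary is a direct consequence of those results.
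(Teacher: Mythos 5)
Your argument is correct and follows essentially the same route as the paper: the only substantive step is the density of $\{q : z\in\dom(q)\}$ below an arbitrary $p$, obtained by applying the equivalences of Corollary~\ref{keycorollary} to the singleton $a=\{p\}$ (the paper phrases this as an application of Proposition~\ref{listproposition}, but since Corollary~\ref{keycorollary} is itself derived from that proposition and its statement is the one involving a set $a$ of conditions and a distinguished point $z$, your citation is the more precise one). The remaining checks — that the union of the filter is a function, a $\Gamma$-coloring, and has range in $\gw\times\gw$ — are exactly the routine ones the paper leaves implicit, and you verify them correctly via pairwise compatibility and Corollary~\ref{keycorollary}.
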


\begin{proof}
It is only necessary to verify that for every condition $p\in P_\Gamma$ and every point $z\in X$ there is a condition $q\leq p$ such that $z\in\dom(q)$. This is clear from Proposition~\ref{listproposition} applied to the set $a=\{p\}$.
\end{proof}

\subsection{The balance proof}

As always in geometric set theory, the most important part of the analysis of the poset is the classification of balanced virtual conditions. For this, I need to find a precise criterion as to which partial colorings can be extended to conditions in $P_\Gamma$. Define a partial $\Gamma$-coloring $c\colon X\to\gw$ to be \emph{good} if $\dom(c)=F^d$ for some real closed subfield $F\subset\mathbb{R}$, and for every uncountable algebraic set $A\subset X$ visible from $F$, the set $\{c(y)\colon\forall x\in A\ y\mathrel\Gamma x\}$ belongs to the ideal $\mathcal{I}$.

\begin{proposition}
Let $F\subset\mathbb{R}$ be a countable real closed subfield. For a $\Gamma$-coloring $c\colon F^d\to\gw$, the following are equivalent:

\begin{enumerate}
\item $c$ is good;
\item there is $p\in P$ such that $c\subset p$.
\end{enumerate}
\end{proposition}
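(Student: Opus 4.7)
The proof of (2) $\Rightarrow$ (1) is direct. Suppose $c \subset p$ with $p \in P_\Gamma$, and let $A \subset X$ be any uncountable algebraic set visible from $F$; then $A$ is also visible from $\supp(p) \supseteq F$. For $y \in F^d$ that is $\Gamma$-connected to every point of $A$, the fact that $p$ is a $\Gamma$-coloring forces $\{x \in A : p(x) = c(y)\} \subseteq \{y\}$, which is finite. Hence $\{c(y) : y \in F^d,\ \forall x \in A\ y \mathrel\Gamma x\}$ is contained in $\{i \in \gw : \{x \in A : p(x) = i\}\text{ is finite}\}$, and the latter lies in $\mathcal{I}$ by clause (3) of Definition~\ref{posetdefinition}. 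Thus $c$ is good.

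For (1) $\Rightarrow$ (2), the plan is to apply Proposition~\ref{listproposition} with a carefully chosen list function and then paste $c$ back on $F^d$, in the spirit of the proof of Corollary~\ref{keycorollary}. Assume without loss of generality (by enlarging $F$ within its real closure if necessary) that $F$ contains the coefficients of the polynomials defining each algebraic graph $\Gamma_i$ in the decomposition $\Gamma = \bigcup_i \Gamma_i$. Set $C = F^d$ and define $G \colon X \to \mathcal{P}(\gw)$ by $G(y) = \emptyset$ if $y \in F^d$, and $G(y) = \{c(y') : y' \in F^d,\ y' \mathrel\Gamma y\}$ otherwise.

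The principal obstacle is verifying that $G(y) \in \mathcal{I}$ for $y \notin F^d$. Let $A \subset X$ be the inclusion-smallest algebraic set visible from $F$ containing $y$; by the algebraic geometry facts recalled before Proposition~\ref{listproposition}, $A$ is uncountable. For any $y' \in F^d$ and index $i$ with $y' \mathrel\Gamma_i y$, the $\Gamma_i$-neighborhood $B = \{x \in X : y' \mathrel\Gamma_i x\}$ of $y'$ is algebraic, visible from $F$ (this is where containment of the coefficients of $\Gamma_i$ in $F$ enters), and contains $y$; minimality of $A$ then forces $A \subseteq B$, so $y'$ is $\Gamma_i$-connected, and in particular $\Gamma$-connected, to every point of $A$. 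Therefore $G(y) \subseteq \{c(y') : y' \in F^d,\ \forall x \in A\ y' \mathrel\Gamma x\}$, which lies in $\mathcal{I}$ by goodness of $c$ applied to $A$.

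With $G$ in hand, applying Proposition~\ref{listproposition} with $C = F^d$ produces a countable real closed field $F^* \supseteq F$ and a $\Gamma$-coloring $p' \colon (F^*)^d \to \gw$ satisfying $p'(y) \notin G(y)$ and clause (3) of the proposition. Define $p$ on $(F^*)^d$ by $p \restriction F^d = c$ and $p(y) = p'(y)$ for $y \in (F^*)^d \setminus F^d$. Two routine verifications finish the proof: $p$ is a $\Gamma$-coloring---the only nontrivial case is a $\Gamma$-edge between $y_1 \in F^d$ and $y_2 \in (F^*)^d \setminus F^d$, where $p(y_2) = p'(y_2) \notin G(y_2) \ni c(y_1) = p(y_1)$; and $p$ satisfies clause (3) of Definition~\ref{posetdefinition}, because for any uncountable algebraic $A$ visible from $F^*$ the inclusion $\{x \in A : p(x) = i\} \supseteq \{x \in A \setminus F^d : p'(x) = i\}$ shows that the set of $i$ for which the former is finite is contained in the analogous set for $p'$, which lies in $\mathcal{I}$. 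Hence $p \in P_\Gamma$ extends $c$.
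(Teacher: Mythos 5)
Your proof is correct and follows essentially the same route as the paper: define a list function $G$ from the goodness of $c$, apply Proposition~\ref{listproposition} with $C=F^d$, then overwrite the output with $c$ on $F^d$. Two minor remarks. First, the paper defines $G(y)$ directly as the set of colors $c(y')$ of points $y'\in F^d$ that are $\Gamma$-connected to \emph{every} point of the smallest algebraic set $A$ visible from $F$ containing $y$ (so that $G(y)\in\mathcal I$ is literally the goodness clause), whereas you define $G(y)$ as the set of colors of all $\Gamma$-neighbours of $y$ in $F^d$ and then prove the two sets coincide via the minimality of $A$; this is the same content, just unpacked. Second, be careful with your ``without loss of generality'' step: $F$ is assumed real closed in the proposition, so ``enlarging $F$ within its real closure'' is a vacuous operation and cannot supply coefficients of the $\Gamma_i$ that $F$ lacks. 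The assumption you want (that $\Gamma_i$-neighbourhoods of points of $F^d$ are visible from $F$) is tacitly used in the paper as well, notably in the proof of Corollary~\ref{keycorollary}, and is automatic in the intended applications $\Delta_{2q},\Delta_{3q}$ where the defining polynomials have rational coefficients; but as phrased your reduction does not establish it, so you should either state it as a standing hypothesis on the decomposition $\Gamma=\bigcup_i\Gamma_i$ or observe that it holds for the graphs at hand.
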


\begin{proof}
(2) trivially implies (1). For the opposite implication, fix a good coloring $c$.  Define a function $G\colon X\to\mathcal{I}$ by letting $G(x)=0$ if $x\in\dom(c)$, and $G(x)=\{c(y)\colon\forall x\in A\ y\mathrel\Gamma x$ and $A$ is the smallest algebraic set visible in $F$ containing $x\}$. Use Proposition~\ref{listproposition} with $C=F$ to find a coloring $p'$. Let $p$ be the coloring with the same domain as $p'$ and same values except for all $y\in\dom(c)$ let $p(y)=c(y)$. This is the desired common lower bound of the set $a$. Just like in Corollary~\ref{keycorollary}, this is a condition. It stands witness to item (2).
\end{proof}

\noindent For a total good $\Gamma$-coloring $c\colon X\to\gw$, let $\tau_c$ be the $\coll(\gw, X)$-name for the supremum of all conditions $p\in P_\Gamma$ such that $c\subset p$. By the proposition, this is a supremum of a nonempty set.

\begin{proposition}
\label{balanceproposition}
Let $m, n\geq 1$ be numbers. Let $\Gamma=\bigcup_i\Gamma_i$ be a countable union of algebraic graphs on a Euclidean space $X$ such that no graph $\Gamma_i$ contains a perfect biclique (resp. an injective homomorphic copy of $K_{m, n}$). Then

\begin{enumerate}
\item for every total good $\Gamma$-coloring $c\colon X\to\gw$, the pair $\langle \coll(\gw, X), \check c\rangle$ is balanced (resp. $K_{m, n}$-balanced) in $P_\Gamma$;
\item for every balanced pair $\langle Q, \tau\rangle$ there is a total good $\Gamma$-coloring $c$ such that the balanced pairs $\langle \coll(\gw, X), \check c\rangle$ and $\langle Q, \tau\rangle$ are equivalent;
\item distinct colorings yield inequivalent balanced pairs.
\end{enumerate}
\end{proposition}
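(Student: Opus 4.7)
I would prove the three items in order: (1) by a direct case analysis using Corollary~\ref{keycorollary}, (2) by extracting a coloring from the balanced pair and verifying goodness, and (3) by a forcing comparison.

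For (1), fix a mutually generic (resp.\ $K_{m,n}$-transcendental) pair $V[G_0]$, $V[G_1]$ and conditions $p_0 \in V[G_0]$, $p_1 \in V[G_1]$ in $P_\Gamma$ with $c \subseteq p_0, p_1$. Corollary~\ref{keycorollary} reduces compatibility of $\{p_0, p_1\}$ to showing that $p_0 \cup p_1$ is a function and a $\Gamma$-coloring. The function property follows since any common domain point lies in $V[G_0] \cap V[G_1] = V \subseteq \dom(c)$, where both conditions agree with $c$. For the coloring property, suppose $x_0 \in \dom(p_0)$ and $x_1 \in \dom(p_1)$ satisfy $x_0 \mathrel{\Gamma_i} x_1$ with $p_0(x_0) = p_1(x_1)$. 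If both $x_0, x_1 \in V$, this contradicts $c$ being a $\Gamma$-coloring; if both lie outside $V$, then in the basic case Proposition~\ref{perfproposition} produces a perfect biclique in $\Gamma_i$, and in the $K_{m,n}$-case transcendence yields an injective homomorphic copy of $K_{m,n}$ in $\Gamma_i$, each contradicting the hypothesis on $\Gamma$; and if exactly one, say $x_0$, lies in $V$, then $x_0 \in \dom(c) \subseteq \dom(p_1)$ forces $p_1(x_0) = c(x_0) = p_0(x_0) = p_1(x_1)$, violating $p_1$ being a $\Gamma$-coloring on $\dom(p_1)$.

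For (2), given a balanced pair $\langle Q, \tau \rangle$, I work in a $Q$-generic extension $V[G]$, interpret $\tau$ as some $\bar p \in P_\Gamma$, and for each $x \in X$ extend $\bar p$ to $p' \leq \bar p$ with $x \in \dom(p')$ (available by Proposition~\ref{listproposition} applied to $\{\bar p\}$), then set $c(x) = p'(x)$. Balance guarantees this is independent of $G$ and $p'$, since two such values computed in mutually generic further extensions must coincide by the common-lower-bound property applied at a point in $V$. This produces a total map $c \colon X \to \omega$ in $V$, and the same compatibility argument applied to pairs $x \mathrel\Gamma y$ (both placed into one condition's domain) shows $c$ is a $\Gamma$-coloring. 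For goodness, fix an uncountable algebraic $A \subseteq X$ visible from $\R$ and consider $D = \{c(z) : \forall x \in A\ z \mathrel\Gamma x\}$; since $c$ is a $\Gamma$-coloring, no color in $D$ appears on $A$, so any condition $p \supseteq c$ with $A \subseteq \dom(p)$ has $\{x \in A : p(x) = i\}$ empty for $i \in D$, and clause (3) of Definition~\ref{posetdefinition} puts $D$ in $\mathcal{I}$. The equivalence of $\langle Q, \tau \rangle$ with $\langle \coll(\omega, X), \check c \rangle$ is then a routine matching of the conditions below each virtual condition, applying balance in both directions. Item (3) is immediate: distinct $c \neq c'$ disagreeing at some $x \in X$ force incompatible values of $\bar p(x)$, so the corresponding virtual conditions cannot be equivalent.

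The principal obstacle I expect is the goodness verification in (2): one must guarantee that the support $\supp(p)$ of the auxiliary condition contains the coefficients defining $A$, so that clause (3) of the poset genuinely applies. This hinges on the fact that in $V[G]$ the ground-model field $\R^V$ becomes countable and supports can be freely enlarged above $\R^V$, which also underwrites the totality of the extracted $c$ via Proposition~\ref{listproposition} applied to singletons.
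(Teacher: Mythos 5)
Your proposal is correct and follows essentially the same approach as the paper: item (1) reduces compatibility to $p_0\cup p_1$ being a coloring and handles the cross-model edges via Proposition~\ref{perfproposition} and the transcendence hypothesis, and items (2) and (3) proceed by the same extraction-of-$c$-and-goodness argument (the paper first strengthens $\langle Q,\tau\rangle$ so that $Q\Vdash X\cap V\subseteq\dom(\tau)$, which is the same device you describe, and then cites \cite[Proposition 5.2.6]{z:geometric} for the equivalence of pairs). Your goodness verification is the right one, noting only that the set $A$ should be taken visible from the ground-model reals $\mathbb{R}^V$ rather than from $\mathbb{R}$, which is what the definition of goodness in fact requires.
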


\begin{proof}
For (1), suppose that $V[G_0]$ and $V[G_1]$ are mutually generic extensions. I first claim that there are no points $x_0\in X\cap V[G_0]\setminus V$ and $x_1\in X\cap V[G_1]\setminus V$ which are $\Gamma$-connected. Suppose towards contradiction that this fails. Find $i\in\gw$ such that $x_0, x_1$ are $\Gamma_i$-connected. It follows from Proposition~\ref{perfproposition} that $\Gamma_i$ contains a perfect biclique.  This contradicts the initial assumptions on the algebraic graph $\Gamma_i$. 

For the $K_{m, n}$-balance statement in (1), assume that $V[G_0], V[G_1]$ are $K_{m, n}$-transcendental extensions and $p_0\in V[G_0]$, $p_1\in V[G_1]$ are conditions containing $c$ as a subset. For the compatibility, I must show that $p_0\cup p_1$ is a $\Gamma$-coloring. By the initial assumptions on the graphs $\Gamma_i$ and the transcendence assumptions, for any $\Gamma$-connected pair $x_0, x_1\in\dom(p_0\cup p_1)$, it must be the case that one of the points is in $V$. Consequently, either both points belong to $\dom(p_0)$ or to $\dom(p_1)$ ane are assigned different colors as $p_0, p_1$ are both $\Gamma$-colorings.

For (2), first strengthen the pair $\langle Q, \tau\rangle$ so that $Q\Vdash X\cap V\subset\dom(\tau)$. By a balance argument, for each point $x\in X$ there must be a specific number $c(x)\in\gw$ such that $Q\Vdash\tau(\check x)=c(x)$. It is immediate that $c\colon X\to\gw$ is a good total $\Gamma$-coloring and $Q\Vdash\check c\subset\tau$. It follows from \cite[Proposition 5.2.6]{z:geometric} that the balanced pairs $\langle \coll(\gw, X), \check c\rangle$ and $\langle Q, \tau\rangle$ are equivalent.

(3) is immediate. 
\end{proof}

\begin{proposition}
Let $m, n\geq 1$ be numbers. Let $\Gamma=\bigcup_i\Gamma_i$ be a countable union of algebraic graphs on a Euclidean space $X$ such that no graph $\Gamma_i$ contains a perfect biclique (resp. an injective homomorphic copy of $K_{m, n}$). Then the poset $P_\Gamma$ is balanced (resp. $K_{m, n}$-balanced).
\end{proposition}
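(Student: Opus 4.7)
The plan is to invoke Proposition~\ref{balanceproposition}(1), which already promotes every total good $\Gamma$-coloring $c\colon X\to\gw$ into a balanced (respectively $K_{m,n}$-balanced) virtual condition $\langle\coll(\gw,X),\check c\rangle$. Thus the task reduces to exhibiting, below every condition $p\in P_\Gamma$, a total good $\Gamma$-coloring $c\supseteq p$; the associated virtual condition will then witness balance of $p$.

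To construct such a $c$, I would work under the Continuum Hypothesis in the ground model so that $|X|=\aone$, enumerate $X=\{x_\alpha\colon\alpha<\gw_1\}$, and build a descending sequence $\langle p_\alpha\colon\alpha<\gw_1\rangle$ in $P_\Gamma$ starting from $p_0=p$. At a successor stage, Corollary~\ref{coloringcorollary} (which rests on Proposition~\ref{listproposition}) yields a strengthening $p_{\alpha+1}\leq p_\alpha$ with $x_\alpha\in\dom(p_{\alpha+1})$; at a limit stage $\lambda<\gw_1$, which necessarily has countable cofinality, the $\gs$-closure of $P_\Gamma$ lets me take $p_\lambda=\bigcup_{\beta<\lambda}p_\beta$. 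Setting $c=\bigcup_{\alpha<\gw_1}p_\alpha$ then gives a total $\Gamma$-coloring with $\dom(c)=X$ and $c\supseteq p$.

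The main obstacle, and the only step requiring a genuine argument beyond recursion bookkeeping, is verifying that $c$ is good. Given an uncountable algebraic set $A\subseteq X$, its defining polynomials have countably many coefficients that generate a countable real closed subfield of $\R$; by regularity of $\gw_1$ this subfield is contained in $\supp(p_\alpha)$ for some $\alpha<\gw_1$, so $A$ is visible from $\supp(p_\alpha)$. Clause (3) in the definition of $P_\Gamma$ then yields $\{i\in\gw\colon\{x\in A\colon p_\alpha(x)=i\}\text{ is finite}\}\in\mathcal{I}$. Passing from $p_\alpha$ to $c$ can only enlarge each color class inside $A$, so the analogous set with $c$ in place of $p_\alpha$ is a subset of the former and hence again in $\mathcal{I}$. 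Finally, if $y$ is $\Gamma$-connected to every point of $A$, then $c(y)$ cannot agree with $c(x)$ for any $x\in A$, lest $c$ fail to be a $\Gamma$-coloring; therefore $\{c(y)\colon\forall x\in A\ y\mathrel\Gamma x\}$ is contained in the ideal set just identified, completing the verification of goodness and hence of the proposition.
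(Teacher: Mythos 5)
Your reduction to Proposition~\ref{balanceproposition}(1) is exactly what the paper does: both proofs boil down to exhibiting a total good $\Gamma$-coloring $c$ extending the given condition $p$. What diverges is the construction of $c$, and there your argument has a real restriction.

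You build $c$ by a transfinite recursion of length $\omega_1$, extending conditions one point at a time via Corollary~\ref{coloringcorollary} and taking unions at countable-cofinality limits using $\sigma$-closure. This works, and your verification of goodness is clean: because each uncountable algebraic set $A$ is defined by finitely many polynomials with finitely many coefficients, $A$ is already visible from some $\supp(p_\alpha)$; clause (3) for $p_\alpha$ persists to $c$ because color classes only grow; and since $c$ is a total $\Gamma$-coloring, any $y$ adjacent to all of $A$ has $\{x\in A\colon c(x)=c(y)\}=\emptyset$, hence $c(y)$ falls into the ideal set already identified. That chain of inclusions is correct. However, the recursion requires enumerating $X$ in order type $\omega_1$, i.e., CH, and without it the recursion cannot be continued past a limit ordinal of uncountable cofinality while keeping $\supp(p_\alpha)$ countable. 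So what you actually prove is the proposition under CH, not as stated.

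The paper avoids this by constructing $c$ directly in ZFC: it invokes Schmerl's results on chromatic numbers of algebraic graphs to obtain an outright total $\Gamma$-coloring $b\colon X\to\omega$, picks a set $a\in\mathcal{I}$ not covered modulo finite by any of the countably many sets $\omega\setminus p''A$ ($A$ algebraic, uncountable, visible from $\supp(p)$), splits $a$ into disjoint pieces $a_n$ with the same non-covering property, and then defines $c$ off $\dom(p)$ by routing through $b$ and the smallest visible algebraic set containing the given point. This is a single-step construction with no transfinite recursion and no cardinal arithmetic assumptions. The tradeoff is that it leans on an external combinatorial input (Schmerl) and a somewhat delicate choice of the sets $a_n$, whereas your argument reuses only machinery internal to the paper. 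Your CH restriction would in fact be harmless for the intended application (cofinal balance below an inaccessible, where CH can be forced along the way by a small poset), but it does not prove the proposition as written; you should either prove it in ZFC along the paper's lines, or at least remark explicitly that CH is assumed and that this suffices for cofinal balance.
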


\begin{proof}
Let $p\in P_\Gamma$ be a condition; by Proposition~\ref{balanceproposition}, it is enough to find a good total coloring $c\colon X\to\gw$ extending $p$. To do this, first use the work of Schmerl to see that the chromatic number of $\Gamma$ is countable. \cite[Proposition 1.3]{schmerl:avoidable} shows that an algebraic graph has countable chromatic number if and only if it does not contain an uncountable clique, and then \cite[Theorem 0.1]{schmerl:avoidable} shows that there is a coloring which works for any countable collection of such graphs simultaneously.  Fix a total $\Gamma$-coloring $b\colon X\to\gw$. Find a set $a\subset\gw$ in the ideal $\mathcal{I}$ which is not modulo finite covered by any of the sets $\gw\setminus p''A$ where $A\subset X$ is an uncountable algebraic set visible from $\supp(p)$. Let $a_n\colon n\in\gw$ be pairwise disjoint subsets of $a$ with the same property. Now, define the coloring $c$ by letting $c(x)=p(x)$ for $x\in\dom(p)$, and letting $c(x)\in a_{b(x)}$ be some element which belongs to $p''A$, whenever $x\in X\setminus\dom(p)$ is a point and $A\subset X$ is the smallest algebraic subset of $X$ visible from $\supp(p)$ which contains $x$. 

First, I claim that $c$ is a $\Gamma$-coloring. If $x, y\in X$ are $\Gamma$-connected points and both belong to $\dom(p)$, they receive different colors as $p$ is a $\Gamma$-coloring. If they both belong to $X\setminus\dom(p)$, then they receive different colors as $b$ is a coloring. Finally, if $y\in \dom(p)$ and $x\notin\dom(p)$ and $i\in\gw$ is such that $y\mathrel\Gamma_i x$ holds, then $x$ belongs to the algebraic $\Gamma_i$-neighborhood $A$ of $y$, and by the definition of $c(x)$, it receives a color which $p$ used on $A$ and therefore a color different from $p(y)=c(y)$.

To see why $c$ is good, suppose that $B\subset X$ is an algebraic set which is uncountable and minimal such with respect to inclusion. Consider the set $C=\{y\in X\colon\forall x\in B\ y\mathrel\Gamma x\}$; I must prove that $c''C\in\mathcal{I}$. To this end, let $C_0=C\cap\dom(p)$ and $C_1=C\setminus\dom(p)$. It is clear that $c''C_1\subset a\in\mathcal{I}$. For a point $y\in C_0$, use the minimality of $B$ to argue that there must be $i\in\gw$ such that $B$ is a subset of the $\Gamma_i$-neighborhood of $y$--otherwise each of the neighborhoods has countable intersection with $B$, contradicting the assumption that $y\in C_0$. This means that writing $A\subset X$ for the smallest algebraic set visible from $\supp(p)$ such that $A\subset B$, $y$ is $\Gamma_i$-connected to all elements of $A$, and as such $c(y)$ must belong to the $\mathcal{I}$-small set $\gw\setminus p''A$ which does not depend on the choice of $y\in C_0$. Thus, $c''C\in\mathcal{I}$ holds as desired.
\end{proof}

\bibliographystyle{plain}
\bibliography{odkazy,shelah,zapletal}

\begin{thebibliography}{1}

\bibitem{hajnal:chromatic}
Paul Erd{\H o}s and Andr{\' a}s Hajnal.
\newblock On chromatic number of graphs and set systems.
\newblock {\em Acta Math. Acad. Sci. Hung.}, 17:61--99, 1966.

\bibitem{komjath:r3}
Paul Erd{\H o}s and P{\' e}ter Komj{\' a}th.
\newblock Countable decompositions of $\mathbb{R}^2$ and $\mathbb{R}^3$.
\newblock {\em Discrete and Computational Geometry}, 5:325--331, 1990.

\bibitem{jech:newset}
Thomas Jech.
\newblock {\em Set Theory}.
\newblock Springer Verlag, New York, 2002.

\bibitem{komjath:decomposition}
P{\' e}ter Komj{\' a}th.
\newblock A decomposition theorem for {$\mathbb{R}^n$}.
\newblock {\em Proc. Amer. Math. Soc.}, 120:921--927, 1994.

\bibitem{z:geometric}
Paul Larson and Jind{\v r}ich Zapletal.
\newblock {\em Geometric set theory}.
\newblock AMS Surveys and Monographs. American Mathematical Society,
  Providence, 2020.

\bibitem{marker:book}
David Marker.
\newblock {\em Model theory: An introduction}.
\newblock Graduate Texts in Mathematics 217. Springer Verlag, 2002.

\bibitem{schmerl:avoidable}
James~H. Schmerl.
\newblock Avoidable algebraic subsets of {E}uclidean space.
\newblock {\em Trans. Amer. Math. Soc.}, 352:2479--2489, 1999.

\end{thebibliography}

\end{document}